\newcommand{\Or}{\mathcal{O}}
\newcommand{\Ai}{\mathrm{Ai}}
\newcommand{\Pb}{\mathbb{P}}
\newcommand{\Id}{\mathbbm{1}}
\newcommand{\e}{\varepsilon}
\newcommand{\R}{\mathbb{R}}
\newcommand{\Z}{\mathbb{Z}}
\newcommand{\X}{\mathcal{X}}
\newtheorem{prop}{Proposition}[section]
\newtheorem{thm}[prop]{Theorem}
\newtheorem{lem}[prop]{Lemma}
\newtheorem{defin}[prop]{Definition}
\newtheorem{cla}[prop]{Claim}
\newtheorem{remark}[prop]{Remark}
\numberwithin{equation}{section}
\title{The Airy$_2$ process and the 3D Ising model}
\author{Patrik L. Ferrari\thanks{Institute for Applied Mathematics, University of Bonn, Germany; \texttt{ferrari@uni-bonn.de}} \and Senya Shlosman\thanks{Skolkovo Institute of Science and Technology, Moscow, Russia; Aix Marseille Univ, Universit\'e de Toulon, CNRS, CPT, Marseille, France; Inst. of the Information Transmission Problems, RAS, Moscow, Russia; \texttt{shlosman@gmail.com}}}
\date{}
\begin{document}
\sloppy

\maketitle

\begin{abstract}
The Ferrari-Spohn diffusion process arises as limit process for the 2D Ising model as well as random walks with area penalty. Motivated by the 3D Ising model, we consider $M$ such diffusions conditioned not to intersect. We show that the top process converges to the Airy$_2$ process as $M\to\infty$. We then explain the relation with the 3D Ising model and present some conjectures about it.
\end{abstract}

\section{Introduction and result}\label{SectIntro}

In this paper we consider $M$ non-intersecting Ferrari-Spohn diffusions and show that the top trajectory converges to the Airy$_2$ process in the $M\to\infty$ limit. The Ferrari-Spohn diffusion, denoted by $\tilde \X(t)$, is a diffusion process which first appeared in~\cite{FS04} as the limiting process of a Brownian motion conditioned to stay above a large circular barrier.
The infinitesimal generator of $\tilde \X(t)$ is given by
\begin{equation}\label{eqGenFS}
(L f)(x)=\frac12\frac{d^2 f(x)}{dx^2}+a(x)\frac{df(x)}{dx},
\end{equation}
where the drift is given by $a(x)=\frac{d}{dx}\ln(\Omega(x))$ with $\Omega(x)=\Ai(-\omega_1+x)$. Here, $-\omega_1$ is the right-most zero of the Airy function $\Ai$.

Another way to obtain this process is to consider a random walk conditioned to stay positive, which can be thought as having a hard-wall at the origin, and subjected to a penalty given in terms of the area under its trajectory. This model was studied in~\cite{ISV15} and it was motivated by the 2D Ising model, as we will discuss in more detail in Section~\ref{SectIsing2D}. One considers a discrete time random walk on $\Z$ with one-time transition probability $p(y)$ such that $\sum_{y\in\Z} y p(y)=0$ and $\sigma^2=\sum_{y\in\Z} y^2 p(y)<\infty$. Denote by $\mathbb{X}=(X_k)_{-N\leq k\leq N}$ the random walk starting from time $-N$ to time $N$ and by $\mathcal{P}_{N+}^{\mathsf{uv}}$ the set of trajectories such that $X_{-N}=\mathsf{u}$, $X_N=\mathsf{v}$, and $X_k>0$ for all $k$. Then, for $\lambda>0$, we consider the probability distribution on $\mathcal{P}_{N+}^{\mathsf{uv}}$ given by
\begin{equation}
\Pb_{N,\lambda}^{\mathsf{uv}}\left( \mathbb{X}\right) =\frac{1}{Z_{N,\lambda}^{\mathsf{uv}}}e^{-\lambda\sum_{j=-N}^{N}
X_{j}} \prod_{j=-N}^{N-1} p(X_{j+1}-X_j), \label{eqAreaTild}
\end{equation}
where $Z_{N,\lambda}^{\mathsf{uv}}$ is the normalization constant.

Without penalty, that is with $\lambda=0$, the process $\mathbb{X}$ under $\mathbb{P}_{N,0}^{\mathsf{uv}}$ fluctuates away from the wall by
$N^{1/2}$. On the other hand, if $\lambda>0$, then $\mathbb{X}$ remains localized as $N\to\infty$ (if $\mathsf{u,v}$ stay bounded). In the regime
$\lambda\to 0$ the typical distance of $\mathbb{X}$ from the wall is $\lambda^{-1/3}$, while the correlation distance along the interface is of order
$\lambda^{-2/3}$. Thus it makes sense to consider the process
$\alpha \lambda^{1/3}X_{[\beta t \lambda^{-2/3}]}$ for some constants $\alpha,\beta>0$.
For $\lambda=1/N$,
 $\alpha=\sigma^{2/3}2^{-1/3}$ and $\beta=\sigma^{-2/3} 2^{-2/3}$, is it proven in~\cite{ISV15} that\footnote{In~\cite{ISV15} the scaling was with $\alpha=\beta=1$, but we have chosen to add these to have convergence to $\tilde\X(t)$ without scaling factors in there.}, in the sense of finite-dimensional distributions,
\begin{equation}\label{eqRWtoFS}
\lim_{N\to\infty} \alpha N^{2/3}X_{[\beta t N^{2/3}]}=\tilde \X(t),
\end{equation}
provided that the initial and final points are $o(N^{1/3})$ from the wall. We refer to this scaling as (1/2/3) scaling.

Motivated by the 3D Ising model, see Section~\ref{slab} for a detailed discussion, the random walk model was extended in~\cite{IVW16} to several random walks as above but with the extra constraint to be non-intersecting. More precisely, one considers $M$ non-intersecting walks $\mathbb{X}^n=(X^n_k)_{-N\leq k\leq N}$, $n=1,\ldots,M$ subject to the same area tilt as in \eqref{eqAreaTild}\footnote{An extension to area penalty with prefactor $\lambda^i$ with $\lambda>1$ instead of constant $\lambda$ has been considered in~\cite{CIW19}.}. On top of it, one conditions on $X^n_k< X^{n+1}_k$, $n=1,\ldots,M-1$, $-N\leq k\leq N$. Under the scaling as in \eqref{eqRWtoFS}, it is shown that the collection $\{\mathbb{X}^n,1\leq n\leq M\}$ converges to the so-called Dyson Ferrari-Spohn diffusion process $\X(t)=(\X_1(t),\ldots,\X_M(t))$. It is a diffusion process in the Weyl chamber $W_{M,+}=\{0\leq x_1\leq \ldots \leq x_M\}\subset \R^M$ with zero-boundary conditions on $\partial W_{M,+}$. Its generator is given by
\begin{equation}\label{eqgenDFS}
(L_M f)(x)=\sum_{k=1}^M\left(\frac12 \frac{d^2}{d x_k^2}+a_{M,k}(x) \frac{d}{dx_k}\right) f(x),
\end{equation}
where $a_{M,k}(x)=\frac{d}{dx_k} \ln(\Omega_M(x))$ with the ground state $\Omega_M$ given by $\Omega_M(x_1,\ldots,x_M)=\det[\Ai(x_j-\omega_i)]_{1\leq i,j\leq M}$. Here, $-\omega_1>-\omega_2>\ldots$ denote the zeroes of the Airy function $\Ai$.

The result of this paper is to show that $\X_M(t)$ converges to the Airy$_2$ process as $M\to\infty$, which is a universal limit process in the Kardar-Parisi-Zhang universality class of stochastic growth models. The Airy$_2$ process was discovered in the study of the polynuclear growth model~\cite{PS02}.
\begin{thm}\label{thmMain}
Let $\tau_1<\tau_2<\ldots<\tau_m$ and $S_1,\ldots,S_m$ be fixed. Set $t_k=2\tau_k$ and $s_k=c_1 M^{2/3}+S_k$, with $c_1=\frac{3^{2/3}\pi^{2/3}}{2^{2/3}}$. Then,
\begin{equation}\label{eq51}
\lim_{N\to\infty} \Pb\bigg(\bigcap_{k=1}^m\{\X_M(t_k)\leq s_k\}\bigg) =\Pb\bigg(\bigcap_{k=1}^m\{{\cal A}_2(\tau_k)\leq S_k\}\bigg)
\end{equation}
with ${\cal A}_2$ is the Airy$_2$ process.
\end{thm}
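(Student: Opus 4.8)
The plan is to write both sides of~\eqref{eq51} as Fredholm determinants of kernels on $L^2$ of a disjoint union of half-lines, one per sampled time, and to prove these kernels converge in trace norm. First one makes the determinantal structure of $\X(t)$ explicit. The one-particle operator $\mathcal{L}_1 f=\tfrac12 f''-\tfrac12 x f$ on $[0,\infty)$ with a Dirichlet condition at the origin has eigenfunctions $\phi_k(x)=\Ai(x-\omega_k)$, $\mathcal{L}_1\phi_k=-\tfrac12\omega_k\phi_k$; these are mutually orthogonal in $L^2([0,\infty))$, and from $\tfrac{\D}{\D z}(\Ai'(z)^2-z\Ai(z)^2)=-\Ai(z)^2$ together with $\Ai(-\omega_k)=0$ one gets $\|\phi_k\|^2_{L^2([0,\infty))}=\int_{-\omega_k}^\infty\Ai(u)^2\,\D u=\Ai'(-\omega_k)^2$; set $\hat\phi_k=\phi_k/|\Ai'(-\omega_k)|$. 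The generator~\eqref{eqgenDFS} is the Doob $h$-transform, via $\Omega_M(x)=\det[\phi_i(x_j)]_{1\le i,j\le M}$, of $M$ independent copies of $\mathcal{L}_1$ conditioned to stay ordered (the ground-state identity $\big(\sum_k\mathcal{L}_1^{(k)}\big)\Omega_M=-\tfrac12\big(\sum_{i\le M}\omega_i\big)\Omega_M$ makes the transform conservative), and its invariant measure is $\Omega_M(x)^2\,\D x/\prod_{i\le M}\Ai'(-\omega_i)^2$ on $W_{M,+}$. Started from this measure, $\X(t)$ is a determinantal process whose space-time correlation functions are, by the Karlin--McGregor formula and the Eynard--Mehta theorem, governed by the extended kernel
\begin{equation*}
K_M\big((t_1,x),(t_2,y)\big)=-\Id_{[t_1<t_2]}\,q_{t_2-t_1}(x,y)+\sum_{k=1}^M e^{-\frac12\omega_k(t_2-t_1)}\hat\phi_k(x)\hat\phi_k(y),
\end{equation*}
where $q_t(x,y)=\sum_{k\ge1}e^{-\frac12\omega_k t}\hat\phi_k(x)\hat\phi_k(y)$ is the transition density of the single killed diffusion, so that $K_M=-\sum_{k>M}e^{-\frac12\omega_k(t_2-t_1)}\hat\phi_k\hat\phi_k$ for $t_1<t_2$. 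Since $\{\X_M(t)\le s\}$ is the event that $\X(t)$ has no point in $(s,\infty)$, the left-hand side of~\eqref{eq51} equals $\det(\Id-K_M)_{L^2(\sqcup_k\{t_k\}\times(s_k,\infty))}$.

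\textbf{Edge scaling.} Next one sets $t_i=2\tau_i$, $x=c_1M^{2/3}+\xi$, $y=c_1M^{2/3}+\eta$ and uses the classical asymptotics of the zeros of $\Ai$, $\omega_k=\big(\tfrac{3\pi(4k-1)}{8}\big)^{2/3}(1+O(k^{-2}))$, which give $\omega_M=c_1M^{2/3}-\tfrac{c_1}{6}M^{-1/3}+o(M^{-1/3})$, spacing $\omega_k-\omega_{k-1}=\Delta_M(1+o(1))$ with $\Delta_M=\tfrac23 c_1M^{-1/3}$, and $\Ai'(-\omega_k)^2=\pi^{-1}\sqrt{\omega_k}(1+o(1))=\pi^{-1}c_1^{1/2}M^{1/3}(1+o(1))$ uniformly over the relevant range $|c_1M^{2/3}-\omega_k|=O(1)$. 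Because $\Ai(z)$ decays like $e^{-\frac23 z^{3/2}}$ as $z\to+\infty$, only indices $k$ with $M-k=O(M^{1/3})$ contribute; writing $k=M-j$ and $\lambda_j:=c_1M^{2/3}-\omega_{M-j}=(j+\tfrac14)\Delta_M(1+o(1))$, and conjugating $K_M$ by the $t$-dependent (hence Fredholm-determinant-irrelevant) factor $e^{-\frac12 c_1M^{2/3}t}$, the kernel at the edge becomes
\begin{equation*}
\pm\,\frac{\pi\,(1+o(1))}{c_1^{1/2}M^{1/3}}\sum_{j}e^{-\frac12\lambda_j(t_1-t_2)}\Ai(\xi+\lambda_j)\Ai(\eta+\lambda_j),
\end{equation*}
with sign $+$ and $j\ge0$ when $t_1\ge t_2$, and sign $-$ and $j<0$ when $t_1<t_2$ (the latter being what survives of $-q_{t_2-t_1}$, the rest killed by the conjugation). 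This is a Riemann sum of mesh $\Delta_M\to0$ whose limit is $\pm(c_1^{1/2}M^{1/3}\Delta_M)^{-1}\pi\int e^{-\lambda(\tau_1-\tau_2)}\Ai(\xi+\lambda)\Ai(\eta+\lambda)\,\D\lambda$ over $(0,\infty)$, resp.\ $(-\infty,0)$; the prefactor is exactly $\tfrac{3\pi}{2c_1^{3/2}}=1$ because $c_1^{3/2}=\tfrac{3\pi}{2}$ — this is precisely what pins down the value of $c_1$ stated in the theorem. Hence the conjugated $K_M$ converges pointwise to the extended Airy kernel
\begin{equation*}
K_{\mathcal{A}_2}\big((\tau_1,\xi),(\tau_2,\eta)\big)=\begin{cases}\displaystyle\int_0^\infty e^{-\lambda(\tau_1-\tau_2)}\Ai(\xi+\lambda)\Ai(\eta+\lambda)\,\D\lambda,&\tau_1\ge\tau_2,\\[1.5mm]\displaystyle-\int_{-\infty}^0 e^{-\lambda(\tau_1-\tau_2)}\Ai(\xi+\lambda)\Ai(\eta+\lambda)\,\D\lambda,&\tau_1<\tau_2,\end{cases}
\end{equation*}
whose Fredholm determinant on $L^2(\sqcup_k\{\tau_k\}\times(S_k,\infty))$ is, by definition of the Airy$_2$ process, the right-hand side of~\eqref{eq51}.

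\textbf{Trace-norm convergence and the main difficulty.} Finally one upgrades pointwise convergence of the (conjugated) kernels to convergence in trace norm on $L^2(\sqcup_k\{\tau_k\}\times(S_k,\infty))$, which yields $\det(\Id-K_M)\to\det(\Id-K_{\mathcal{A}_2})$ and therefore~\eqref{eq51}. This requires (i) Euler--Maclaurin error bounds for the Riemann sum, uniform in $M$ and in $(\xi,\eta)$ on the half-lines $(S_k,\infty)$, and (ii) a uniform super-exponential estimate $|K_M((t_i,x),(t_j,y))|\le C e^{-c(\xi+\eta)}$ for $\xi,\eta$ bounded below — together with its analogue for $K_{\mathcal{A}_2}$ — coming from the $e^{-\frac23 z^{3/2}}$ tail of $\Ai$ and the decay of the integrands, which controls the Hilbert--Schmidt and trace norms uniformly; dominated convergence in the Fredholm series then closes the argument. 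The hard part will be the uniform control in (i) near $j=0$: there $\Ai$ crosses from its oscillatory to its decaying regime, the expansions of $\omega_k$ and of $\Ai'(-\omega_k)$ must be controlled with errors uniform in $M$, and one must check that the $O(M^{-1/3})$ correction hidden in $\lambda_j=(j+\tfrac14)\Delta_M(1+o(1))$ merely displaces the lower limit of the limiting integral by $o(1)$ and is therefore harmless. Should the direct Riemann-sum estimates prove awkward, a robust alternative is to express the finite-rank part $\sum_{k=1}^M\hat\phi_k\otimes\hat\phi_k$ as a contour integral of the resolvent of $-\tfrac12\tfrac{\D^2}{\D x^2}+\tfrac12 x$ on $[0,\infty)$ — which is given explicitly in terms of Airy functions — and to carry out a steepest-descent analysis.
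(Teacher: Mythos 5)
Your proposal follows essentially the same route as the paper: the Eynard--Mehta/biorthogonal structure giving the extended kernel $\pm\sum_k e^{-\frac12\omega_k(t_j-t_i)}\hat\phi_k\otimes\hat\phi_k$ split at $k=M$, the scaling $t=2\tau$, $x=c_1M^{2/3}+\xi$ with the conjugation by $e^{-\frac12 c_1M^{2/3}t}$, the Riemann-sum convergence (with the same constants, mesh $c_0^{-1}M^{-1/3}$ and normalization $\Ai'(-\omega_k)^2\simeq\pi^{-1}\sqrt{\omega_k}$) to the extended Airy kernel, and finally dominated convergence in the Fredholm expansion using uniform decay bounds on the kernel. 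The only cosmetic difference is that you phrase the last step as trace-norm convergence while the paper uses pointwise convergence plus exponential/constant bounds (Propositions~\ref{PropPosL} and~\ref{PropNegL}) together with Hadamard's inequality in the Fredholm series; both are standard and your computations of the scaling constants are correct.
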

The Airy$_2$ process is defined by its finite-dimensional distribution as follows.

\begin{defin}
The $m$-point joint distributions of the Airy$_2$ process ${\cal A}_2$ at times $\tau_1<\tau_2<\ldots<\tau_m$ are given by
\begin{equation}
\Pb\bigg(\bigcap_{k=1}^m\{{\cal A}_2(\tau_k)\leq S_k\}\bigg) = \det(\Id-\chi_s K_{\rm Ai})_{L^2(\R\times \{\tau_1,\ldots,\tau_m\})},
\end{equation}
where $\chi_s(x,\tau_k)=\Id_{x>S_k}$ and the extended Airy kernel $K_{\rm Ai}$ is given by
\begin{equation}
K_{\rm Ai}(\xi_i,\tau_i;\xi_j,\tau_j)= \left\{
\begin{array}{ll}
\int_0^\infty d\lambda e^{-\lambda(\tau_i-\tau_j)}\Ai(\xi_i+\lambda)\Ai(\xi_j+\lambda),&\textrm{ if }\tau_i\geq\tau_j,\\[0.5em]
-\int_{-\infty}^0 d\lambda e^{-\lambda(\tau_i-\tau_j)}\Ai(\xi_i+\lambda)\Ai(\xi_j+\lambda),&\textrm{ if }\tau_i<\tau_j.
\end{array}
\right.
\end{equation}
\end{defin}
In particular, the one-point distribution of the Airy$_2$ process is the so-called GUE Tracy-Widom distribution function, discovered in random matrices~\cite{TW94}.

In a zero-temperature case of the 3D-Ising corner, corresponding to a plane partition model (see~\cite{OR01}) which can be described via non-intersecting line ensembles, the appearance of the Airy$_2$ process at the edge was proven in~\cite{FS03}. However for the real 3D-Ising model the problem remains open.

For the special case of the one-point distribution, the original work~\cite{FS04} on Brownian motion conditioned to stay above a large circle, was extended in a physical level of rigor in~\cite{GS21} to several walks and a physical derivation of the one-point case is provided.

The rest of the paper is organized as follows. In Section~\ref{sectIsing} we discuss the relation between the model we studied and the Ising model. Furthermore, we present some conjectures on the Ising model related with our work. Finally, in Section~\ref{sectProof} we define in more detail the model and prove Theorem~\ref{thmMain}.

\paragraph*{Acknowledments:} The work of P.L. Ferrari was partly funded by the Deutsche Forschungsgemeinschaft (DFG, German Research Foundation) under Germany’s Excellence Strategy - GZ 2047/1, projekt-id 390685813 and Projektnummer 211504053 - SFB 1060. The work of S. Shlosman was partly funded by the ANR-RNF grant 20-41-09009.\\
This work has started during the scientific program on \emph{Randomness, Integrability, and Universality} of the Galileo Galilei Institute in 2022. We thank GGI for the hospitality and support. In particular we appreciate the activity and enthusiasm of Filippo Colomo, as an organizer of the meeting. We thank him for discussions on the subject of the present paper, as well as Alexey Borodin, Vadim Gorin, Kurt Johansson and Herbert Spohn.

\section{Relations with the Ising model}\label{sectIsing}

\subsection{Two-dimensional Ising model and Ferrari-Spohn diffusion}\label{SectIsing2D}
The following instance of the low-temperature (large $\beta$) Ising model was
considered in~\cite{SS96}. It is living in the square box $V_{N}=\left[
-N,N\right] \times\left[ -N,N\right] $ with $\left( -\right) $ boundary
conditions and under positive magnetic field $h=\frac{B}{N}$. It is proven
there that if $B$ is below certain critical value $B_{c}\left( \beta\right)
,$ then the $\left( -\right) $ boundary conditions win, and the box $V_{N}$
is filled with the $\left( -\right) $ phase. In the opposite case $B>B_{c}$
the magnetic field wins, and the box $V_{N}$ contains a big droplet of the
$\left( +\right) $-phase. This (random) droplet $\Gamma$ is pressed to the
sides of the box, and the $\left( -\right) $ phase fills only the four
corners of $V_{N},$ with total area $\sim c\left( \beta\right) \left\vert
V_{N}\right\vert ,$ where $c\left( \beta\right) \to0$ as
$\beta\to\infty$.

One would like to understand the fluctuations of $\Gamma$ in the various parts
of $V_{N}$. It turns out that in regions of $\Gamma$ where the distance to the corner is $O(N)$, the fluctuations
are of order $N^{1/2}$. But along the sides of $V_{N}$, where $\Gamma$ is
pressed to the walls, the fluctuations are only of order $N^{1/3},$ as was
explained first in~\cite{CLMST16}.

Comparing this random curve $\Gamma$ near the wall with the tilted random walk
of Section~\ref{SectIntro} one sees quite a similar picture, and expects that
again this part of $\Gamma,$ scaled by $N^{2/3}$ along the wall and by
$N^{1/3}$ in the orthogonal direction, converges, as $N\to\infty,$ to
the Ferrari-Spohn diffusion. Of course, the curve $\Gamma$ is not a graph of the
function; it has overhangs, plus its different parts interact, unlike the case
of the tilted random walks. Yet all these fine details of $\Gamma$ disappear
in the $(1/2/3)$ scaling, and the resulting limiting process is
again the same Ferrari-Spohn diffusion, see~\cite{IOSV22} for the details (compare with~\cite{GG21}). Its
generator is given by \eqref{eqGenFS}, with the $\Ai$ function scaled differently, namely replacing
$\Omega(x)$ by $\Ai\Big( \sqrt[3]{4Bm_\beta^\ast\chi_\beta^{1/2}}x-\omega_{1}\Big)$. Here appear various
quantities, characterizing the 2D Ising model for all subcritical temperatures; $m_\beta^\ast$ is the spontaneous magnetization, $\chi_\beta$ is the curvature of
the Wulff shape (at its bottom point), and $B$ is taken to be bigger than $B_{c}(\beta)$.

\subsection{Three dimensional model}\label{slab}
In this section a family of random lines will appear, which is the motivation for the multiple random walks models considered in~\cite{IVW16} and discussed in Section~\ref{SectIntro}.

The caricature of the crystal growth process was considered in
\cite{IS19}. The motivation was to study the dynamics of the large droplet of
the $\left( +\right) $-phase floating in the cubic box $V_{N}$ of size
$8N^{3}$ filled with the $\left( -\right) $-phase of the low-temperature 3D
Ising model, when the volume of the droplet (being of the order of $cN^{3}$) grows.

For that reason it was considered the 3D Ising model in $V_{N}$ with Dobrushin
boundary conditions, i.e. $\left( -\right) $-spins attached to the boundary
$\partial V_{N}$ in the upper half-space, and $\left( +\right) $-spins
attached to the boundary $\partial V_{N}$ in the lower half-space. These
boundary conditions force a (random) interface $\Gamma$ into $V_{N},$
separating the two phases. Further on, the model was considered in the
canonical ensemble, i.e. restricted to the spin configurations $\sigma$ with
the fixed value of the total magnetization $M$,
\begin{equation}
M\left( \sigma\right) =\sum_{x\in V_{N}}\sigma_{x}=C, \label{12}
\end{equation}
and the problem was to study the properties of $\Gamma$ as a function of the
value of $C$. A technical simplification was made in~\cite{IS19}, by passing to
the SOS-approximation of the model, so the level of the study corresponds to the random walk model in Section~\ref{SectIntro}, rather than to Section~\ref{SectIsing2D}.

The main result of~\cite{IS19} is that the interesting behavior of the surface
$\Gamma$ happens if one varies the constant $C$ on the scale $N^{2},$ i.e.
considers the dependence of $\Gamma$ as a function of the parameter $a,$ where
$C=aN^{2}$. Then the interface $\Gamma$ undergoes a sequence of transitions at
the values $0<a_{1}<a_{2}<..$. of the following nature:

\begin{enumerate}
\item For the values of $a\in\lbrack0,a_{1})$ the interface $\Gamma$ is
\textit{rigid}, i.e. it looks very similar to the horizontal plane $L=\left\{
\left( x,y,z\right) :z=0\right\} $ in $\mathbb{R}^{3}$. More precisely, the
density of locations where $\Gamma$ differs from $L$ (i.e. the relative area
of the symmetric difference $\Gamma\bigtriangleup L$) is low at low
temperatures: $\left\vert \Gamma\bigtriangleup L\right\vert \sim\alpha\left(
\beta\right) N^{2},$ with $\alpha\left( \beta\right) \to0$ as
$\beta\to\infty$. For any given location $\left( \bar{x},\bar
{y},0\right) \in L,$ the probability that the intersection of $\Gamma$ with
the vertical line $l_{\left( \bar{x},\bar{y}\right) }=\left\{ \left(
x,y,z\right) :x=\bar{x},y=\bar{y}\right\}$ is different from the point $\left( \bar{x},\bar
{y},0\right) \in L$ is of the order of
$e^{-4\beta}$. So, neglecting the local fluctuations, one
can say that the interface $\Gamma$ in the regime $a\in\lbrack0,a_{1})$ has
its height $h\left( \Gamma\right) $ equal to zero.

\item For the values of $a\in(a_{1},a_{2})$ the interface $\Gamma$ has one
\textit{monolayer} defined by the (random) contour $\gamma_{1}\subset L,$ which
means that the height of $\Gamma$ inside $\gamma_{1},$ i.e. $h\left(
\Gamma{\LARGE |}_{\mathrm{Int}\left( \gamma_{1}\right) }\right) ,$ equals
to $1,$ while $h\left( \Gamma{\LARGE |}_{\mathrm{Ext}\left( \gamma
_{1}\right) }\right) =0$ (again, neglecting the local fluctuations). This
monolayer is macroscopic in size, meaning that $\mathrm{diam}\left(
\gamma_{1}\right) \geq d_{1}\left( \beta\right) N,$ with $d_{1}\left(
\beta\right) >const>0$ uniformly in $a\in(a_{1},a_{2}).$ The segment
$(a_{1},a_{2})$ consists of two subsegments, $(a_{1},a_{2})=(a_{1},a_{3/2})\cup$ $(a_{3/2},a_{2})$, where the behavior of the interface
$\gamma_{1}$ differs slightly. For $a\in(a_{1},a_{3/2})$ the contour
$\gamma_{1}$ does not touch the boundary $\partial L$ and typically stays away
from it, at a distance $O\left( N\right) .$ For $a\in(a_{3/2},a_{2})$ the
contour $\gamma_{1}$ does touch the boundary $\partial L.$

\item For the values of $a\in(a_{2},a_{3})$ the interface $\Gamma$ has two
monolayers, defined by the pair of contours $\gamma_{1},\gamma_{2}\subset L,$
$\gamma_{2}\subset\mathrm{Int}\left( \gamma_{1}\right) .$ The height of
$\Gamma$ inside $\gamma_{2},$ i.e. $h\left( \Gamma{\LARGE |}_{\mathrm{Int}
\left( \gamma_{2}\right) }\right) ,$ equals to $2,$ $h\left(
\Gamma{\LARGE |}_{\mathrm{Int}\left( \gamma_{1}\right) \mathrm{\setminus
Int}\left( \gamma_{2}\right) }\right) =1,$ while again $h\left(
\Gamma{\LARGE |}_{\mathrm{Ext}\left( \gamma_{1}\right) }\right) =0$. Again,
$(a_{2},a_{3})=(a_{2},a_{5/2})\cup$ $(a_{5/2},a_{3}),$ and for $a\in
(a_{2},a_{5/2})$ the contour $\gamma_{2}$ is well inside $\gamma_{1},$ the
distance between them being typically $O\left( N\right) .$ In the remaining
regime $a\in(a_{5/2},a_{3})$ they are touching each other, the distance
between them being $O\left( N^{1/2}\right) .$

\item This process of creating extra monolayers continues, as the parameter
$a$ grows. But starting from some value $k=k\left( \beta\right) $ the
following extra feature takes place: for all $a\in(a_{k},a_{k+1})$ the
distances $\mathrm{dist}\left( \gamma_{i},\gamma_{j}\right) =o\left(
N\right) $ between all the $k$ nested contours $\gamma_{i}$. (Same holds even
for the Hausdorff distances, $\mathcal{H}\mathrm{dist}\left( \gamma
_{i},\gamma_{j}\right) $.) In other words, the subsegments $(a_{k},a_{k+1/2})\subset(a_{k},a_{k+1})$ become empty, once $k\geq k\left(
\beta\right) .$ As we just explained above, this is not the case for the few
initial values of $k,$ where the distance between the two top contours
$\mathrm{dist}\left( \gamma_{k-1},\gamma_{k}\right) =O\left( N\right) ,$
while $\mathrm{dist}\left( \gamma_{i},\gamma_{j}\right) =o\left( N\right)
$ for remaining $i,j<k,$ and the top monolayer stays away from the flock of
all the bottom ones.
\end{enumerate}

\subsection{Conjectures on the Ising model}\label{sectConjectures}
Here we will discuss the questions of the Wulff shapes and the way the random
interface $\Gamma$ is approximated by it. Let us first discuss the
analog of the setting of the Section~\ref{slab}, where the number of levels,
$k,$ of the interface, is not fixed, but grows with $N$. In other words, we
consider the Ising model as in the Section~\ref{slab}, but we put a different
canonical constraint:
\begin{equation}
M(\sigma) =\sum_{x\in V_{N}}\sigma_{x}=bN^{3},
\end{equation}
with $b>0$ some fixed constant. In order to avoid the situation of $\Gamma$
touching the top of the box $V_{N}$ one should take $b$ not too big. At low
temperatures, which is a regime we focus on, $b\leq1$ is good enough.

\paragraph{Conjectures}
\begin{enumerate}
\item \textit{Limit shape. }For every $b\in\left[ 0,1\right] ,$ every
$\beta$ large enough, there exists a (non-random) surface $W_{\beta,b}$ in the
cube $Q=\left[ -1,+1\right] ^{3},$ the boundary $\partial W_{\beta,b}$ of
which is the square, $\partial W_{\beta,b}=\partial Q\cap L$. This surface
$W_{\beta,b}$ is the typical shape of the random surface $\Gamma$. It means
that for every $\e>0$
\begin{equation}\label{17}
\lim_{N\to\infty}\Pb\left(\mathcal{H}\mathrm{dist}\left( \frac{1}{N}\Gamma,W_{\beta,b}\right)
>\e\right)=0,
\end{equation}
where $\mathcal{H}\mathrm{dist}$ is the Hausdorff distance.

\item \textit{The facets.} The surface $W_{\beta,b}$ is obtained by the
solution of the Wulff variation problem~\cite{DKS92}. It has certain height, $z_{\beta,b}$.
The intersection
\begin{equation}
W_{\beta,b}\cap \{L+z\}=\left\{
\begin{array}{ll}
\varnothing & \textrm{if }z>z_{\beta,b},\\
\textrm{1D curve} & \textrm{if }z<z_{\beta,b},
\end{array}
\right.
\end{equation}
while $W_{\beta,b}\cap \{L+z_{\beta,b}\}$
is a closed 2D region
with smooth boundary, which is the flat facet of $W_{\beta,b}$. The existence
of the facet is the corollary of the fact that the surface tension function
$\tau_{\beta}\left( \mathbf{n}\right)$, $\mathbf{n}\in\mathbb{S}^{2}$ has a
cusp at $\mathbf{n=}\left( 0,0,1\right) $ for all $\beta$ large enough. Let
$A_{\beta,b}>0$ be its area; clearly, $A_{\beta,b}\to4$ as
$\beta\to\infty$.

The random interface $\Gamma$ also has a (random) flat facet, in the following
sense. Define the height $h$ by
\begin{equation}
h(\Gamma)=\max\{z\,|\,\textrm{Area}\left( \Gamma\cap \{L+z\}\right) >\tfrac{1}{2}A_{\beta,b}N^{2}\},
\end{equation}
and if such value does not exist, put $h\left( \Gamma\right) =\infty$. We
define the facet $\Phi\left( \Gamma\right) \subset\mathbb{R}^{2}$ as the
intersection
\begin{equation}
\Phi\left( \Gamma\right) \cap\left\{ L+h\left( \Gamma\right) \right\} .
\end{equation}
We put $\Phi\left( \Gamma\right) =\varnothing$ for $h\left( \Gamma\right)=\infty$.

We conjecture that with probability going to $1$ as $\beta\to \infty$
the following happens:
\begin{itemize}
\item[(a)] $h\left(\Gamma\right)$ is finite,
\item[(b)] $\Phi\left(\Gamma\right)$ is indeed the facet of $\Gamma$, in the sense
that \textit{the next to }$\Phi\left( \Gamma\right) $ \textit{layer is very
small}:
\begin{equation}
\textrm{Area}\left( \Gamma\cap L+h\left( \Gamma\right) +1\right) <c\left(\beta\right) N^{2}
\end{equation}
with $c\left( \beta\right) \to 0$ as $\beta\to \infty$.
\end{itemize}

\item \textit{Airy$_2$ process.} Let $\partial\Phi$ be the exterior boundary of
the random facet $\Phi$. Our main conjecture is that the fluctuations of $\partial\Phi$ for the typical interface converge to the Airy$_2$
process, as $N\to\infty$.

The meaning of the statement is the following. Of course, for some
$\Gamma$ the curve $\partial\Phi$ can be weird or even empty. We claim that such curves
$\partial\Phi$ (and the interfaces $\Gamma$ themselves) are not typical: their probability goes to zero as
$N\to\infty$.
\end{enumerate}

The same kind of conjectures can be made for the case when one considers the
canonical low-temperature Ising model in the box $V_{N}$ with $\left(
-\right) $-boundary condition and with the canonical constraint
\begin{equation}
M\left( \sigma\right) =\sum_{x\in V_{N}}\sigma_{x}=\left( -m_{\beta}^{\ast
}+b\right) N^{3},
\end{equation}
where $b>0$. The canonical constraint produces an interface $\Gamma$ without
boundary, separating the $\left( +\right) $-phase inside $\Gamma$ from the
$\left( -\right) $-phase outside it, of linear size $\sim N$. In order that
$\Gamma$ can stay away from the walls $\partial V_{N}$ the parameter $b$
should not be too large; $b<1/2$ is fine.

The conjectures about the behavior of the typical $\Gamma$-s are very similar
to the above. They also have the asymptotic shape, given by the surface
$\tilde{W}_{\beta,b}$ in the cube $Q=\left[ -1,+1\right]^{3}$, this time
without boundary. It is given by the Wulff construction, see~\cite{DKS92} for
the 2D case and~\cite{Bod99,CP00} for the 3D case. The surface $\tilde{W}_{\beta,b}$ has six flat facets, in the above sense, while the typical interface
$\Gamma$ also has six random flat facets, of the linear size $\sim N$.
For the typical $\Gamma$ the fluctuations of the boundaries of the facets are
again given by the Airy$_2$ process.

We finish our conjecture list by pointing to the difference between the results of~\cite{DKS92} and of~\cite{Bod99,CP00}. In both cases the main claim is
that the random interface $\Gamma$ is close to its asymptotic shape $W$. But while in~\cite{DKS92} the closeness is measured in the Hausdorff distance, see
\eqref{17}, in~\cite{Bod99,CP00} it is measured in the weaker
$L^{1}$ sense, which replaces the distance $\mathcal{H}\mathrm{dist}(\frac{1}{N}\Gamma,W_{\beta,b})$ by the volume inside the closed
surface $\frac{1}{N}\Gamma\cup W_{\beta,b}$. (In the case without boundary one has to consider the volume of the symmetric difference $\textrm{Int}(\frac{1}{N}\Gamma) \bigtriangleup \textrm{Int}(\tilde{W}_{\beta,b})$
of the 3D bodies $\textrm{Int}(\frac{1}{N}\Gamma)$ and $\textrm{Int}(\tilde{W}_{\beta,b})$
properly shifted with respect to each other.) Our last conjecture is that for
low temperatures the convergence of the random interface $\Gamma$ to its
asymptotic shape $W$ holds in the stronger Hausdorff distance $\mathcal{H}\mathrm{dist}$. We also note that the part 2 of our conjectures does not hold
at zero temperature, see~\cite{BSS05}.

\section{Model and proof of the main result}\label{sectProof}

\subsection{Semigroup of the Ferrari-Spohn diffusion}
Let us introduce in more details the Ferrari-Spohn diffusion, see~\cite{FS04}. Consider the Airy operator
\begin{equation}
H_\Ai=-\frac{d^2}{dx^2}+x\textrm{ on }\R_+
\end{equation}
with Dirichlet boundary conditions at $0$. Let $-\omega_1>-\omega_2>\ldots$ the zeroes of the Airy function $\Ai$.
The normalized eigenfunctions of $H_\Ai$ are given by
\begin{equation}
\varphi_k(x)=\frac{\Ai(-\omega_k+x)}{(-1)^{k-1}\Ai'(-\omega_k)}, \quad k\geq 1,
\end{equation}
and have eigenvalues
\begin{equation}
H_\Ai \varphi_k(x)= \omega_k \varphi_k(x).
\end{equation}
The normalization comes from the identity $\int_{\R_+} dx (\Ai(x-a))^2=(\Ai'(-a))^2+a (\Ai(-a))^2$, where the last term vanishes at $a=\omega_k$. This identity is obtained by integration by parts and the identity $\Ai''(x)=x\Ai(x)$.
The ground state is $\Omega(x)=\varphi_1(x)$. So the Hamiltonian
\begin{equation}
H = -\frac12\frac{d^2}{dx^2}+\frac12x-\frac12\omega_1
\end{equation}
satisfies $H \Omega=0$. The Ferrari-Spohn diffusion $\tilde\X(t)$ is the stationary process on $\R_+$ obtained by the ground-state transformation (the Doob h-transform of $-H$): $(L f)(x)=-\Omega(x)^{-1} (H \Omega f)(x)$. The generator of $\tilde\X(t)$ is given by
\begin{equation}
(L f)(x)=\frac12\frac{d^2 f(x)}{dx^2}+a(x)\frac{df(x)}{dx}
\end{equation}
acting on smooth functions $f$, where
\begin{equation}
a(x)=\frac{d}{dx} \ln(\Omega(x))=\frac{\Ai'(-\omega_1+x)}{\Ai(-\omega_1+x)}.
\end{equation}
What we are interested in is the transition probability, namely the semigroup $T_t$ of $\tilde\X(t)$. It has integral kernel
\begin{equation}
T_t(x,y)=e^{\frac12\omega_1 t}\frac{\Omega(y)}{\Omega(x)} {\cal T}_t(x,y)
\end{equation}
with
\begin{equation}
{\cal T}_t(x,y)=(e^{-t \hat H})(x,y)=\sum_{k\geq 1}e^{-\frac12\omega_k t}\varphi_k(x)\varphi_k(y),
\end{equation}
and $\hat H=H+\frac12 \omega_1=\frac12 H_{\rm Ai}$.
The stationary distribution of $\tilde\X(t)$ has density on $\R_+$ given by
\begin{equation}
\rho(x)=\Omega(x)^2=\frac{\Ai(-\omega_1+x)^2}{\Ai'(-\omega_1)^2}.
\end{equation}

\subsection{Joint distributions of the Dyson Ferrari-Spohn diffusion}
Now we consider $M$ non-intersecting Ferrari-Spohn diffusions, $0<\X_1(t)<\X_2(t)<\ldots<\X_M(t)$. As shown in~\cite{ISV15,IVW16}, the ground state is given by the Slater determinant
\begin{equation}
\Omega_M(x_1,\ldots,x_M)=\det[\varphi_i(x_j)]_{1\leq i,j\leq M},
\end{equation}
and satisfies $H_M \Omega_M=0$ for
\begin{equation}
H_M =\sum_{k=1}^M \left(-\frac12\frac{d^2}{dx_k^2}+\frac12 x_k-\frac12\omega_k\right).
\end{equation}
It was shown in~\cite{ISV15,IVW16} that the generator of $\X(t)=(\X_1(t),\ldots,\X_M(t))$ is indeed \eqref{eqgenDFS} and its stationary distribution is given by
\begin{equation}\label{eqOnePt}
\Pb(\X_k(t)\in dx_k,1\leq k\leq M)=\frac{\Omega_M(x)^2}{Z_M} dx_1 \ldots dx_M,
\end{equation}
where $Z_M$ is the normalization constant. Moreover, the transition semigroup has kernel
\begin{equation}\label{eqTransition}
T_t(x,y)=e^{\sum_{k=1}^M \omega_k t}\frac{\Omega_M(y)}{\Omega_M(x)} \det\left[{\cal T}_t(x_i,y_j)\right]_{1\leq i,j\leq M}.
\end{equation}

From \eqref{eqOnePt} and \eqref{eqTransition} we get that the joint distributions at times $t_1<t_2<\ldots<t_m$ is given by
\begin{equation}\label{eqMultiPt}
\begin{aligned}
&\Pb\bigg(\bigcap_{n=1}^m\bigcap_{k=1}^M \{\X_k(t_n)\in dx_k^n\}\bigg)=\frac{\Omega_M(x)^2}{Z_M} \prod_{n=1}^{m-1}T_t(x_i^n,x_j^{n+1})_{1\leq i,j\leq n} \prod_{n=1}^m\prod_{k=1}^M dx_k^n\\
& =\frac{1}{\tilde Z_M} \det[e^{-\frac12 \omega_i t_1}\varphi_i(x_j^1)]_{1\leq i,j\leq M} \prod_{n=1}^{m-1} \det\big[{\cal T}_{t_{n+1}-t_n}(x_i^n,x_j^{n+1})\big]_{1\leq i,j\leq M} \\&\quad\times \det[e^{\frac12\omega_i t_M}\varphi_i(x_j^M)]_{1\leq i,j\leq M} \prod_{n=1}^m\prod_{k=1}^M dx_k^n\\
& =\frac{1}{\tilde Z_M} \det[\Phi^1_i(x_j^1)]_{1\leq i,j\leq M} \prod_{n=1}^{m-1} \det\big[{\cal T}_{t_{n+1}-t_n}(x_i^n,x_j^{n+1})\big]_{1\leq i,j\leq M} \\&\quad\times \det[\Psi^M_i(x_j^M)]_{1\leq i,j\leq M} \prod_{n=1}^m\prod_{k=1}^M dx_k^n,
\end{aligned}
\end{equation}
where we have set
\begin{equation}
\Psi^M_i(x)=e^{\frac12 \omega_i t_M} \varphi_i(x),\quad \Phi^1_i(x)=e^{-\frac12 \omega_i t_1}\varphi_i(x).
\end{equation}
Define for $n<M$, $\Psi^n_i(x)=({\cal T}_{t_M-t_n} *\Psi^M_i)(x)$ and for $n>1$, $\Phi^n_i(x)=(\Phi^1_i *{\cal T}_{t_{n}-t_1})(x)$. A simple computation gives
\begin{equation}
\Psi^n_i(x) = e^{\frac12\omega_i t_n} \varphi_i(x),\quad \Phi^n_i(x)=e^{-\frac12 \omega_i t_n} \varphi_i(x).
\end{equation}
In particular, these functions satisfy the orthogonality relation
\begin{equation}\label{eqOrtho}
\int_{\R_+} dx \Phi^n_i(x) \Psi^n_j(x)=\delta_{i,j},\quad 1\leq i,j\leq M.
\end{equation}

\subsection{Determinantal correlations}
A measure of the form \eqref{eqOnePt} forms a biorthogonal ensemble~\cite{Bor98} and thus it defines a determinantal point process. Eynard-Mehta theorem tells us that a measure of the form \eqref{eqMultiPt} is determinantal on the space $\R_+\times \{t_1,\ldots,t_m\}$, see~\cite{EM97,FN98,Jo03b,TW03b,RB04}.

Let us first consider the one-point measure, which forms a biorthogonal ensemble~\cite{Bor98}. Using the orthogonality property \eqref{eqOrtho}, one immediately obtains that the determinantal point process
\begin{equation}
\eta=\sum_{k=1}^M\delta_{x_k}
\end{equation}
has correlation kernel
\begin{equation}
K_M(x,y)=\sum_{k=1}^M \varphi_k(x)\varphi_k(y)
=\sum_{k=1}^M \frac{\Ai(-\omega_k+x) \Ai(-\omega_k+y)}{(\Ai'(-\omega_k))^2}.
\end{equation}
In particular, the distribution of the top path at time $t$, $\X_M(t)$, equals a gap probability of $\eta$ and thus it is given by a Fredholm determinant
\begin{equation}
\Pb(X_M(t)\leq s)=\det(\Id-K_M)_{L^2(s,\infty)}
\end{equation}
for any $s>0$.

Next consider the point process on $W_{M,+}\times \{t_1,\ldots,t_m\}$,
\begin{equation}
\eta=\sum_{n=1}^m\sum_{k=1}^M \delta_{(x_k^n,t_n)}.
\end{equation}
Its correlation kernel can be easily computed\footnote{See e.g.~Theorem 1.4 of~\cite{RB04} or Theorem~4.2 of~\cite{BF07} for notations closer to this paper.} and it is given by
\begin{equation}\label{eq3.22}
\begin{aligned}
K_M(x,t_i;y,t_j)&=- {\cal T}_{t_j-t_i}(x,y)\Id_{t_j>t_i}+\sum_{k=1}^M \Psi^{i}_k(x) \Phi^{j}_k(y)\\
&=\left\{\begin{array}{ll}
\sum_{k=1}^M e^{-\frac12 \omega_k (t_j-t_i)} \varphi_k(x)\varphi_k(y),&\textrm{ for }t_i\geq t_j,\\
-\sum_{k=M+1}^\infty e^{-\frac12 \omega_k (t_j-t_i)} \varphi_k(x)\varphi_k(y),&\textrm{ for }t_i<t_j.
\end{array}
\right.
\end{aligned}
\end{equation}
Then, for any $s_1,\ldots,s_m>0$,
\begin{equation}
\Pb\bigg(\bigcap_{k=1}^m \{\X_M(t_k)\leq s_k\}\bigg)=\det(\Id-\chi_s K_M)_{L^2(\R\times\{t_1,\ldots,t_m\})},
\end{equation}
where $\chi_s(t_k,x)=\Id_{x>s_k}$.

Unlike in most of the papers where the convergence to the Airy$_2$ have been proven, here we do not have a double integral representation for the correlation kernel. In this paper we need to analyze the limit of the correlation kernel using the expression in \eqref{eq3.22}.

\subsection{Scaling limit}
To understand what is the correct scaling limit for space and time that we need to consider, we first need to know how $-\omega_k$ scales for large $k$. We have (see Chapter 10.4 of~\cite{AS84})
\begin{equation}
\omega_k=f(\tfrac{3\pi}{2}(k-1/4)),\quad \Ai'(-\omega_k)=(-1)^{k-1}f_1(\tfrac{3\pi}{2}(k-1/4)),
\end{equation}
where the functions $f,f_1$ satisfy
\begin{equation}
f(z)=z^{2/3}\left[1+\frac{5}{48 z^2}+\Or(z^{-4})\right],\quad f_1(z)=\frac{z^{1/6}}{\sqrt{\pi}}\left[1+\frac{5}{48 z^2}+\Or(z^{-4})\right].
\end{equation}
Set $c_0=\frac{3^{1/3}}{2^{1/3}\pi^{2/3}}$ and $c_1=\frac{3^{2/3}\pi^{2/3}}{2^{2/3}}$. Then, for large $M$,
\begin{equation}\label{eq24}
\begin{aligned}
\omega_{[M-\lambda c_0 M^{1/3}]} &=\left[\frac{3\pi}{2}(M-\lambda c_0 M^{1/3}-1/4)\right]^{2/3}(1+\Or(M^{-2}))\\
&=c_1 M^{2/3}-\lambda+\Or(M^{-1/3},\lambda^2 M^{-2/3})
\end{aligned}
\end{equation}
as well as
\begin{equation}\label{eq25}
\Ai'(-\omega_{[M-\lambda c_0 M^{1/3}]})=\sqrt{c_0} M^{1/6}(1+\Or(\lambda M^{-2/3}).
\end{equation}
Therefore,
\begin{equation}\label{eq26}
\varphi_{[M-\lambda c_0 M^{1/3}]}(c_1 M^{2/3}+\xi)=\frac{\Ai(\xi+\lambda+\Or(M^{-1/3},\lambda^2 M^{-2/3}))}{\sqrt{c_0}M^{1/6}(1+\Or(\lambda M^{-2/3}))}\simeq \frac{\Ai(\lambda+\xi)}{\sqrt{c_0}M^{1/6}}.
\end{equation}
This implies that we need to scale space and time as
\begin{equation}
x=c_1 M^{2/3}+\xi,\quad t=2 \tau.
\end{equation}

Then the point process
\begin{equation}
\tilde \eta=\sum_{n=1}^m\sum_{k=1}^M\delta_{(x_k^n-c_1 M^{2/3},t_n)}
\end{equation}
is determinantal with (conjugated) correlation kernel given by
\begin{equation}
\tilde K_M(\xi_i,\tau_i;\xi_j,\tau_j)=e^{(\tau_j-\tau_i) c_1 M^{2/3}}K_M(c_1 M^{2/3}+\xi_i,2\tau_i;c_2 M^{2/3}+\xi_j,2\tau_j).
\end{equation}
In particular, for $s_k=c_1 M^{2/3}+S_k$ and $t_k=2\tau_k$,
\begin{equation}\label{eq4.32}
\Pb\bigg(\bigcap_{k=1}^m \{\X_M(t_k)\leq s_k\}\bigg)=\det(\Id-\chi_S \tilde K_M)_{L^2(\R\times\{\tau_1,\ldots,\tau_m\})}.
\end{equation}

For $\tau_i\geq \tau_j$ we have
\begin{equation}
\begin{aligned}
\tilde K_M(\xi_i,\tau_i;\xi_j,\tau_j)&\simeq \frac{1}{c_0 M^{1/3}}\sum_{\lambda\in I_M}e^{-\lambda(\tau_i-\tau_j)}\Ai(\lambda+\xi_i)\Ai(\lambda+\xi_j)\\
&\simeq \int_0^\infty e^{-\lambda(\tau_i-\tau_j)}\Ai(\xi_i+\lambda)\Ai(\xi_j+\lambda)
\end{aligned}
\end{equation}
where $I_M=c_0^{-1}M^{-1/3}\{0,1,\ldots,M-1\}$. Similarly, for $\tau_i<\tau_j$,
\begin{equation}
\tilde K_M(\xi_i,\tau_i;\xi_j,\tau_j)\simeq -\int_{-\infty}^0 e^{-\lambda(\tau_i-\tau_j)}\Ai(\xi_i+\lambda)\Ai(\xi_j+\lambda).
\end{equation}
In order to prove Theorem~\ref{thmMain}, we need to make the above approximations precise. In particular, we need to show the convergence of the Fredholm determinant.

\subsection{Proof of Theorem~\ref{thmMain}}
Let us recall a couple of simple bounds on the Airy function\footnote{The first bound follows by $\lim_{n\to\infty} n^{1/3} J_{[2n+u n^{1/3} u]}(2n)=\Ai(u)$ (see also (3.2.23) of~\cite{AS84}) and the bound of Landau~\cite{Lan00}. For any $x\geq 0.01$, the bound $|\Ai(x)|\leq \frac{1}{2\sqrt{\pi}x^{1/4}}e^{-\frac23 x^{3/2}}$ (see Equation~9.7.15 of~\cite{NIST:DLMF}), is better that the bound $e^{-x}$ and $e^{-0.01}>c$.}:
\begin{equation}\label{eq32}
\sup_{x\in\R}|\Ai(x)|\leq c=0.7857\ldots,\quad |\Ai(x)|\leq e^{-x}
\end{equation}
for all $x\in \R$. Also, the (absolute value of the) derivative at the zeros satisfies the lower bound $|\Ai'(-\omega_k)|\geq |\Ai'(-\omega_{k-1})|\geq |\Ai'(-\omega_1)|=0.7012\ldots$.

For $\tau_i\geq \tau_j$ the kernel is given by
\begin{multline}
\tilde K_M(\xi_i,\tau_i;\xi_j,\tau_j)
=\sum_{\lambda\in I_M} e^{-\left(\omega_{[M-\lambda c_0 M^{1/3}]}-c_1 M^{2/3}\right) (\tau_j-\tau_i)} \\ \times \varphi_{[M-\lambda c_0 M^{1/3}]}(c_1 M^{2/3}+\xi_i)\varphi_{[M-\lambda c_0 M^{1/3}]}(c_1 M^{2/3}+\xi_j)
\end{multline}
with $I_M=c_0^{-1}M^{-1/3}\{0,1,\ldots,M-1\}$, while for $\tau_i< \tau_j$ the kernel is given by
\begin{multline}\label{eq34}
\tilde K_M(\xi_i,\tau_i;\xi_j,\tau_j)
=-\sum_{\lambda\in J_M} e^{-\left(\omega_{[M-\lambda c_0 M^{1/3}]}-c_1 M^{2/3}\right) (\tau_j-\tau_i)} \\ \times \varphi_{[M-\lambda c_0 M^{1/3}]}(c_1 M^{2/3}+\xi_i)\varphi_{[M-\lambda c_0 M^{1/3}]}(c_1 M^{2/3}+\xi_j)
\end{multline}
with $J_M=c_0^{-1}M^{-1/3}\{-1,-2,\ldots\}$.

We have the following pointwise convergence of the functions entering in the expression of the kernel.
\begin{lem}\label{LemPointwisePosL} For any given $\lambda$,
\begin{equation}
\lim_{M\to\infty} \sqrt{c_0} M^{1/6} \varphi_{[M-\lambda c_0 M^{1/3}]}(c_1 M^{2/3}+\xi) = \Ai(\lambda+\xi)
\end{equation}
and
\begin{equation}
\lim_{M\to\infty}e^{-\left(\omega_{[M-\lambda c_0 M^{1/3}]}-c_1 M^{2/3}\right) (\tau_2-\tau_1)} = e^{-\lambda(\tau_1-\tau_2)}.
\end{equation}
\end{lem}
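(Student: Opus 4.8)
The plan is to establish both limits directly from the asymptotic expansions of $\omega_k$ and $\Ai'(-\omega_k)$ already recorded in \eqref{eq24}--\eqref{eq26}, making sure that the error bounds are uniform enough to pass to the limit for each \emph{fixed} $\lambda$ (so the $\lambda$-dependent error terms are harmless once $\lambda$ is frozen). First I would write $k_M=[M-\lambda c_0 M^{1/3}]$ and note $k_M = M - \lambda c_0 M^{1/3} + \Or(1)$, so that $\tfrac{3\pi}{2}(k_M-1/4) = \tfrac{3\pi}{2}M\bigl(1 - \lambda c_0 M^{-2/3} + \Or(M^{-1})\bigr)$. Feeding this into $\omega_{k_M}=f\bigl(\tfrac{3\pi}{2}(k_M-1/4)\bigr)$ and using $f(z)=z^{2/3}(1+\Or(z^{-2}))$ gives, after a binomial expansion of the $2/3$-power,
\[
\omega_{k_M} = \Bigl(\tfrac{3\pi}{2}M\Bigr)^{2/3}\Bigl(1-\tfrac23 \lambda c_0 M^{-2/3}+\Or(M^{-1},\lambda^2 M^{-4/3})\Bigr) = c_1 M^{2/3} - \lambda + \Or(M^{-1/3}),
\]
where the identity $\tfrac23 c_0 (\tfrac{3\pi}{2})^{2/3}=1$ is exactly the reason $c_0$ and $c_1$ were chosen as they are, and with $\lambda$ fixed the $\lambda^2$-error is also $O(M^{-1/3})$ (in fact smaller). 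Consequently $\omega_{k_M}-c_1 M^{2/3}\to -\lambda$, and since $(\tau_2-\tau_1)$ is a fixed finite number, continuity of the exponential yields
\[
e^{-(\omega_{k_M}-c_1 M^{2/3})(\tau_2-\tau_1)} \longrightarrow e^{\lambda(\tau_2-\tau_1)} = e^{-\lambda(\tau_1-\tau_2)},
\]
which is the second claimed limit.

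For the first limit I would use $\varphi_{k}(x)=\Ai(x-\omega_{k})/\bigl((-1)^{k-1}\Ai'(-\omega_{k})\bigr)$. Evaluating at $x=c_1M^{2/3}+\xi$ and using the $\omega_{k_M}$-expansion above,
\[
x-\omega_{k_M} = c_1 M^{2/3}+\xi - \bigl(c_1 M^{2/3}-\lambda+\Or(M^{-1/3})\bigr) = \lambda+\xi+\Or(M^{-1/3}),
\]
so by uniform continuity of $\Ai$ on compacts, $\Ai(x-\omega_{k_M})\to\Ai(\lambda+\xi)$. For the denominator I would invoke \eqref{eq25}: from $\Ai'(-\omega_k)=(-1)^{k-1}f_1(\tfrac{3\pi}{2}(k-1/4))$ with $f_1(z)=\tfrac{z^{1/6}}{\sqrt\pi}(1+\Or(z^{-2}))$, the same substitution gives $|\Ai'(-\omega_{k_M})| = \tfrac{1}{\sqrt\pi}(\tfrac{3\pi}{2}M)^{1/6}(1+\Or(\lambda M^{-2/3})) = \sqrt{c_0}\,M^{1/6}(1+o(1))$, using $\tfrac{1}{\sqrt\pi}(\tfrac{3\pi}{2})^{1/6}=\sqrt{c_0}$. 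The sign factor $(-1)^{k_M-1}$ in numerator and denominator cancels. Therefore
\[
\sqrt{c_0}\,M^{1/6}\,\varphi_{k_M}(c_1M^{2/3}+\xi) = \frac{\sqrt{c_0}M^{1/6}}{\sqrt{c_0}M^{1/6}(1+o(1))}\,\Ai(\lambda+\xi+\Or(M^{-1/3})) \longrightarrow \Ai(\lambda+\xi),
\]
as claimed.

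The only genuinely delicate point — though still routine — is bookkeeping the error from the floor function and verifying that the $\lambda$-dependent remainders in \eqref{eq24}--\eqref{eq26} do not secretly blow up: once $\lambda$ is held fixed, every term of the form $\lambda^2 M^{-2/3}$, $\lambda M^{-2/3}$, or $M^{-1/3}$ tends to $0$, so the expansions are clean. I do not expect any real obstacle here; the lemma is a soft pointwise statement and the heavy lifting (uniformity in $\lambda$, domination for the Fredholm determinant) is deferred to later estimates. Thus the proof is essentially a careful reading of the classical Airy asymptotics from Chapter 10.4 of~\cite{AS84} combined with the definitions of $c_0,c_1$.
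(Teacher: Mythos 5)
Your proposal is correct and follows essentially the same route as the paper: the paper's proof simply cites the asymptotic expansions \eqref{eq24} and \eqref{eq26} (which it derived just before the lemma from the classical expansions of $\omega_k$ and $\Ai'(-\omega_k)$ in~\cite{AS84}), while you re-derive those expansions explicitly, verify the constant identities $\tfrac23 c_0 c_1=1$ and $\tfrac{1}{\sqrt\pi}(\tfrac{3\pi}{2})^{1/6}=\sqrt{c_0}$, and then conclude by continuity. This is just a more detailed write-up of the same argument, with the sign in the exponential limit handled correctly.
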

\begin{proof}
The first statement follows from \eqref{eq26}, while the second from \eqref{eq24}.
\end{proof}

In order to have convergence of the kernel, we need to be able to take the limit inside the sum. We do this by dominated convergence and therefore we need some bounds on the functions also for large values of $\lambda$.
\begin{lem}\label{lemBoundsPosL}
$ $\\
(a) For $\lambda\in [0, M^{1/6}]$, we have
\begin{equation}\label{eq37}
\left| \sqrt{c_0} M^{1/6} \varphi_{[M-\lambda c_0 M^{1/3}]}(c_1 M^{2/3}+\xi)\right|\leq C e^{-(\xi+\lambda)},
\end{equation}
for some constant $C>0$.\\
(b) For $\lambda\in (M^{1/6},M^{2/3}/c_0])$, we have
\begin{equation}
\left| \sqrt{c_0} M^{1/6} \varphi_{[M-\lambda c_0 M^{1/3}]}(c_1 M^{2/3}+\xi)\right|\leq C M^{1/6} e^{-M^{1/6}} e^{-\xi},
\end{equation}
for some constant $C>0$.\\
(c) For $\lambda\geq 0$, the exponential term satisfies
\begin{equation}
e^{-\left(\omega_{[M-\lambda c_0 M^{1/3}]}-c_1 M^{2/3}\right) (\tau_2-\tau_1)} \leq C
\end{equation}
for some constant $C>0$.
\end{lem}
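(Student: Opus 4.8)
The plan is to establish the three uniform bounds by tracking the asymptotics of $\omega_k$ and $\Ai'(-\omega_k)$ through the ranges of $\lambda$ indicated, using the crude bounds on $\Ai$ from \eqref{eq32} and the lower bound $|\Ai'(-\omega_k)|\geq 0.7012\ldots$. Throughout, write $k=k(\lambda,M)=[M-\lambda c_0 M^{1/3}]$, so that $\varphi_k(x)=\Ai(-\omega_k+x)/((-1)^{k-1}\Ai'(-\omega_k))$ and $|\varphi_k(x)|\leq |\Ai(-\omega_k+x)|/0.7012$. Thus $\sqrt{c_0}M^{1/6}|\varphi_k(c_1 M^{2/3}+\xi)|\leq \frac{\sqrt{c_0}M^{1/6}}{0.7012}\,|\Ai(c_1 M^{2/3}-\omega_k+\xi)|$, and the whole problem reduces to controlling $c_1 M^{2/3}-\omega_k$ from below (to exploit $|\Ai(x)|\leq e^{-x}$) together with the $M^{1/6}$ prefactor. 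The key elementary input is the expansion $\omega_k=\big(\tfrac{3\pi}{2}(k-1/4)\big)^{2/3}(1+\Or(k^{-2}))$ from the displayed asymptotics, from which one reads off, for $k=M-\lambda c_0 M^{1/3}$ (and using $c_1 M^{2/3}=\big(\tfrac{3\pi}{2}(M-1/4)\big)^{2/3}$ up to lower order), that $c_1 M^{2/3}-\omega_k = \lambda + \Or(M^{-1/3})+\Or(\lambda^2 M^{-2/3})$ — this is exactly \eqref{eq24}, valid here for $\lambda=o(M^{1/3})$ since one needs $k\to\infty$ and the error terms to stay controlled.

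For part (a), with $\lambda\in[0,M^{1/6}]$ one has $\lambda^2 M^{-2/3}\leq M^{-1/3}\to 0$, so $c_1 M^{2/3}-\omega_k\geq \lambda-C' M^{-1/3}\geq \lambda-1$ for $M$ large, hence $|\Ai(c_1 M^{2/3}-\omega_k+\xi)|\leq e^{-(\xi+\lambda-1)}=e\cdot e^{-(\xi+\lambda)}$ whenever $\xi+\lambda-1\geq 0$; for $\xi+\lambda-1<0$ one instead uses $|\Ai|\leq c$ together with $e^{-(\xi+\lambda)}\geq e^{-1}$ to absorb it into the constant. The remaining $\sqrt{c_0}M^{1/6}$ factor is a problem: it is \emph{not} bounded. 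The resolution — and this is the crux — is that \eqref{eq25} gives $\Ai'(-\omega_k)=\sqrt{c_0}M^{1/6}(1+\Or(\lambda M^{-2/3}))$, so for $\lambda\leq M^{1/6}$ the factor $\sqrt{c_0}M^{1/6}/|\Ai'(-\omega_k)|$ is bounded, rather than using the fixed lower bound $0.7012$. So in range (a) one should use $|\varphi_k(x)|=|\Ai(-\omega_k+x)|/|\Ai'(-\omega_k)|$ directly, and then $\sqrt{c_0}M^{1/6}|\varphi_k(c_1M^{2/3}+\xi)|=\frac{\sqrt{c_0}M^{1/6}}{|\Ai'(-\omega_k)|}|\Ai(c_1M^{2/3}-\omega_k+\xi)|\leq C|\Ai(c_1 M^{2/3}-\omega_k+\xi)|\leq Ce^{-(\xi+\lambda)}$ as above.

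For part (b), $\lambda\in(M^{1/6},M^{2/3}/c_0]$, i.e.\ $k$ ranges down to $O(1)$, so the asymptotic expansion \eqref{eq24} is no longer uniformly valid and we can no longer identify $c_1 M^{2/3}-\omega_k$ precisely with $\lambda$. Here one argues more crudely: $\omega_k\leq \omega_M = c_1 M^{2/3}(1+o(1))$ and, more usefully, $\omega_k=\big(\tfrac{3\pi}{2}(k-1/4)\big)^{2/3}(1+\Or(1))\leq \big(\tfrac{3\pi}{2}(M-\lambda c_0 M^{1/3})\big)^{2/3}(1+o(1))$. Expanding $\big(\tfrac{3\pi}{2}(M-\lambda c_0M^{1/3})\big)^{2/3}=c_1 M^{2/3}(1-\lambda c_0 M^{-2/3})^{2/3}\leq c_1 M^{2/3}-\tfrac23 c_1 c_0\lambda M^{-1/3}\cdot(1+o(1))$ for $\lambda c_0 M^{-2/3}$ bounded away from $1$ (one may need to split off $\lambda$ near the extreme $M^{2/3}/c_0$ and handle it separately since only finitely many small $k$ are involved there, bounding those terms by $c\,\sqrt{c_0}M^{1/6}/0.7012 \leq C M^{1/6}e^{-M^{1/6}}e^{-\xi}$ is false — so instead note $c_1 M^{2/3}-\omega_k\geq$ a constant multiple of $M^{1/3}$ for $\lambda\geq M^{1/6}$), giving $c_1 M^{2/3}-\omega_k\geq c'' M^{1/3}\cdot(\lambda/M^{1/3}) = c''\lambda \cdot(\text{something})$; more simply, $c_1M^{2/3}-\omega_k\geq c_2 M^{1/6}$ for some $c_2>0$ throughout range (b) because $\lambda> M^{1/6}$ and the linear-in-$\lambda$ lower bound degrades by at most a constant. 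Then $\sqrt{c_0}M^{1/6}|\varphi_k|\leq \frac{\sqrt{c_0}M^{1/6}}{0.7012}e^{-(\xi+c_2 M^{1/6})}\leq CM^{1/6}e^{-M^{1/6}}e^{-\xi}$ after relabelling $c_2$ into $1$ via a constant (or simply stating the bound with $e^{-c_2 M^{1/6}}$, which is $\leq C' e^{-M^{1/6}}$). Part (c) is immediate from $c_1 M^{2/3}-\omega_k = \lambda+\Or(M^{-1/3},\lambda^2M^{-2/3})\geq -1$ in range (a) and $c_1 M^{2/3}-\omega_k\geq 0$ up to lower order in range (b) (since $k\leq M$ forces $\omega_k\leq\omega_M$ and $\omega_M - c_1 M^{2/3}=\Or(M^{-2/3})$), so $-(\omega_k-c_1 M^{2/3})(\tau_2-\tau_1)\leq (|\tau_2-\tau_1|)\cdot(\text{bounded})$ — wait, for $\lambda$ large and $\tau_2>\tau_1$ the exponent $-(\omega_k - c_1 M^{2/3})(\tau_2-\tau_1)$ could be large positive if $\omega_k<c_1 M^{2/3}$; here $\omega_k<c_1 M^{2/3}$ always (as $k\leq M$), and $c_1 M^{2/3}-\omega_k$ can be as large as $c_1 M^{2/3}$, so this exponential is NOT bounded uniformly. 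The fix: in the regime $\tau_i<\tau_j$ (which is where $\lambda\geq 0$ appears with this sign, cf.\ \eqref{eq34} uses $J_M$ with negative $\lambda$) — actually re-examining, for $\tau_i\geq\tau_j$ we have $\tau_j-\tau_i\leq 0$, so $-(\omega_k-c_1M^{2/3})(\tau_j-\tau_i)=(\omega_k-c_1 M^{2/3})(\tau_i-\tau_j)\leq 0$ since $\omega_k\leq c_1 M^{2/3}$; thus the exponential is $\leq 1$, proving (c) cleanly.

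The main obstacle is part (b): outside the range where the uniform Airy-zero asymptotics \eqref{eq24}--\eqref{eq25} apply, one must fall back on monotonicity of $\omega_k$ and $|\Ai'(-\omega_k)|$ together with the crude one-sided bound $\big(\tfrac{3\pi}{2}(k-1/4)\big)^{2/3}\leq \omega_k \leq \big(\tfrac{3\pi}{2}(k-1/4)\big)^{2/3}(1+\tfrac{5}{48}(\tfrac{3\pi}{2}(k-1/4))^{-2}+\ldots)$ valid for all $k\geq 1$, and show that $c_1 M^{2/3}-\omega_{[M-\lambda c_0 M^{1/3}]}$ stays at least of order $M^{1/6}$ once $\lambda> M^{1/6}$ — the gain $e^{-M^{1/6}}$ is what makes the dominated-convergence argument work, since there are only $\Or(M^{2/3})$ terms in the sum and $M^{1/6}e^{-M^{1/6}}$ kills polynomial growth in $M$. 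Everything else is bookkeeping with the two elementary bounds in \eqref{eq32}.
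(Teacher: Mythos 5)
Your overall strategy is the same as the paper's: part (a) via the uniform zero asymptotics \eqref{eq24}--\eqref{eq26} together with the elementary Airy bounds \eqref{eq32} (and, correctly, using \eqref{eq25} rather than the crude lower bound on $|\Ai'(-\omega_k)|$ to cancel the $\sqrt{c_0}M^{1/6}$ prefactor); part (b) via monotonicity of the zeros plus the fixed lower bound $|\Ai'(-\omega_k)|\geq|\Ai'(-\omega_1)|$; part (c) from the sign of $\omega_{[M-\lambda c_0M^{1/3}]}-c_1M^{2/3}$, i.e.\ the exponential is at most $1$ in the relevant case $\tau_i\geq\tau_j$. Parts (a) and (c) are fine.

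In part (b), however, there is a genuine flaw in the constant bookkeeping. You only establish $c_1M^{2/3}-\omega_{[M-\lambda c_0M^{1/3}]}\geq c_2 M^{1/6}$ for some unspecified $c_2>0$, and then conclude by ``relabelling $c_2$ into $1$'' or by asserting $e^{-c_2M^{1/6}}\leq C'e^{-M^{1/6}}$. This absorption is false whenever $c_2<1$, since $e^{-c_2M^{1/6}}/e^{-M^{1/6}}=e^{(1-c_2)M^{1/6}}\to\infty$; so as written you have not proved the stated bound with exponent $e^{-M^{1/6}}$ (only a weaker $e^{-c_2M^{1/6}}$ version, which would still suffice for the later dominated-convergence step but is not the lemma). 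Your intermediate claim that $c_1M^{2/3}-\omega_k\geq\const\cdot M^{1/3}$ for all $\lambda\geq M^{1/6}$ is also wrong near the lower end of the range, where the gap is only of order $M^{1/6}$. The clean repair is exactly the paper's one-line argument: since the zeros increase with the index, for $\lambda>M^{1/6}$ one has $\omega_{[M-\lambda c_0M^{1/3}]}\leq\omega_{[M-c_0M^{1/2}]}$, and \eqref{eq24} applied at the boundary value $\lambda=M^{1/6}$ (where the error is $\Or(M^{-1/3})$) gives $c_1M^{2/3}-\omega_{[M-c_0M^{1/2}]}=M^{1/6}+\Or(M^{-1/3})$; hence $\big|\Ai\big(c_1M^{2/3}+\xi-\omega_{[M-\lambda c_0M^{1/3}]}\big)\big|\leq e^{-\xi-M^{1/6}+\Or(M^{-1/3})}$, the $\Or(M^{-1/3})$ being absorbed into $C$, and combining with $|\Ai'(-\omega_k)|\geq 0.7$ yields precisely $CM^{1/6}e^{-M^{1/6}}e^{-\xi}$. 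You have all the ingredients for this (you invoke monotonicity and the validity of \eqref{eq24} up to $\lambda=M^{1/6}$), but the step as you wrote it does not deliver the exponent $M^{1/6}$ with coefficient $1$.
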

\begin{proof}
(a) From \eqref{eq26} we have
\begin{equation}
\sqrt{c_0} M^{1/6} \varphi_{[M-\lambda c_0 M^{1/3}]}(c_1 M^{2/3}+\xi) = \frac{\Ai(\xi+\lambda+\Or(M^{-1/3}))}{1+\Or(M^{-1/2})}
\end{equation}
and the claimed bound follows from \eqref{eq32}.

(b) Using $\omega_{[M-\lambda c_0 M^{1/3}]}\leq \omega_{[M-c_0 M^{1/2}]}$, $|\Ai'(-\omega_{[M-\lambda c_0 M^{1/3}]})|\geq |\Ai'(-\omega_1)|\geq 0.7$, and \eqref{eq37} the claimed bound is proven.

(c) follows from \eqref{eq24}.
\end{proof}

As a consequence we get the following limit and bounds.
\begin{prop}\label{PropPosL}
For $\tau_i\geq \tau_j$, uniformly for $\xi_i,\xi_j$ in a bounded set, we have
\begin{equation}
\lim_{M\to\infty}\tilde K_M(\xi_i,\tau_i;\xi_j,\tau_j)=\int_0^\infty e^{-\lambda(\tau_i-\tau_j)}\Ai(\xi_i+\lambda)\Ai(\xi_j+\lambda)
\end{equation}
and there exists a constant $C>0$ such that
\begin{equation}\label{eqBound1}
|\tilde K_M(\xi_i,\tau_i;\xi_j,\tau_j)|\leq C e^{-(\xi_i+\xi_j)}
\end{equation}
uniformly for all $M$ large enough.
\end{prop}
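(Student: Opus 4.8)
The plan is to treat the sum over $\lambda\in I_M$ defining $\tilde K_M$ as a Riemann sum — the lattice $I_M=c_0^{-1}M^{-1/3}\{0,1,\ldots,M-1\}$ has uniform spacing $\Delta\lambda=c_0^{-1}M^{-1/3}$ — and to pass to the limit by dominated convergence, splitting the summation range at $\lambda=M^{1/6}$ so that parts (a) and (b) of Lemma~\ref{lemBoundsPosL} apply on the two pieces. The point is that factoring $\sqrt{c_0}M^{1/6}$ out of each of the two $\varphi$-factors in a summand produces exactly the prefactor $\tfrac1{c_0M^{1/3}}=\Delta\lambda$, so the whole sum equals $\Delta\lambda\sum_{\lambda\in I_M}g_M(\lambda)$ with $g_M(\lambda)=e^{-(\omega_{[M-\lambda c_0M^{1/3}]}-c_1M^{2/3})(\tau_j-\tau_i)}\big(\sqrt{c_0}M^{1/6}\varphi_{[M-\lambda c_0M^{1/3}]}(c_1M^{2/3}+\xi_i)\big)\big(\sqrt{c_0}M^{1/6}\varphi_{[M-\lambda c_0M^{1/3}]}(c_1M^{2/3}+\xi_j)\big)$, and $\Delta\lambda\sum_{\lambda\in I_M}e^{-2\lambda}\le\tfrac12+o(1)$ by a geometric series estimate.

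For the uniform bound \eqref{eqBound1}: on the range $\lambda\le M^{1/6}$, Lemma~\ref{lemBoundsPosL}(a),(c) gives $|g_M(\lambda)|\le Ce^{-(\xi_i+\lambda)}e^{-(\xi_j+\lambda)}=Ce^{-(\xi_i+\xi_j)}e^{-2\lambda}$, so this part of the sum is at most $Ce^{-(\xi_i+\xi_j)}\Delta\lambda\sum_{\lambda\in I_M}e^{-2\lambda}\le Ce^{-(\xi_i+\xi_j)}$. On the range $M^{1/6}<\lambda\le M^{2/3}/c_0$, Lemma~\ref{lemBoundsPosL}(b),(c) gives $|g_M(\lambda)|\le Ce^{-2M^{1/6}}e^{-(\xi_i+\xi_j)}$, and since there are at most $M$ such terms this part is at most $CMe^{-2M^{1/6}}e^{-(\xi_i+\xi_j)}\to0$. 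Adding the two pieces yields \eqref{eqBound1} for all large $M$, uniformly in $\xi_i,\xi_j$.

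For the convergence I would fix $L>0$, take $M$ so large that $M^{1/6}>L$, and split the sum at $\lambda=L$. For the tail $\lambda>L$ the estimates just used bound it by $Ce^{-(\xi_i+\xi_j)}e^{-2L}+o(1)$, which is $\le\e$ once $L$ is large, uniformly in $M$; the same bound (via $|\Ai(x)|\le e^{-x}$ and $\tau_i\ge\tau_j$) shows $\int_L^\infty e^{-\lambda(\tau_i-\tau_j)}\Ai(\xi_i+\lambda)\Ai(\xi_j+\lambda)\,d\lambda\to0$ as $L\to\infty$. For the head $\lambda\le L$, the expansions \eqref{eq24}--\eqref{eq26} show $g_M(\lambda)\to e^{-\lambda(\tau_i-\tau_j)}\Ai(\xi_i+\lambda)\Ai(\xi_j+\lambda)$ uniformly for $\lambda\in[0,L]$ and $\xi_i,\xi_j$ in the given bounded set — on $[0,L]$ the errors $\Or(M^{-1/3},\lambda^2M^{-2/3})$ in the Airy argument and $\Or(\lambda M^{-2/3})$ in the amplitude are all $\Or(M^{-1/3})$ — so $\Delta\lambda\sum_{\lambda\in I_M,\ \lambda\le L}g_M(\lambda)\to\int_0^Le^{-\lambda(\tau_i-\tau_j)}\Ai(\xi_i+\lambda)\Ai(\xi_j+\lambda)\,d\lambda$ by convergence of Riemann sums of a uniformly convergent sequence. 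Sending first $M\to\infty$ and then $L\to\infty$, an $\e/3$ argument gives $\lim_{M\to\infty}\tilde K_M(\xi_i,\tau_i;\xi_j,\tau_j)=\int_0^\infty e^{-\lambda(\tau_i-\tau_j)}\Ai(\xi_i+\lambda)\Ai(\xi_j+\lambda)\,d\lambda$, uniformly in $\xi_i,\xi_j$ on the bounded set.

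The only genuine obstacle is controlling uniformity in the index $\lambda$: one must verify that the asymptotics \eqref{eq24}--\eqref{eq26} hold with errors uniform for $\lambda$ in a fixed compact (needed for the head) and that the cruder bounds of Lemma~\ref{lemBoundsPosL}(b),(c) decay fast enough to annihilate the entire remaining range $\lambda\in(M^{1/6},M^{2/3}/c_0]$ (needed both for the tail and for the uniform bound). Once these are in place, what remains is the routine bookkeeping of dominated convergence for Riemann sums, with dominating function $Ce^{-(\xi_i+\xi_j)}e^{-2\lambda}$ — the very bound that already appears — so no estimate beyond Lemmas~\ref{LemPointwisePosL} and~\ref{lemBoundsPosL} is needed.
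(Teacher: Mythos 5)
Your proposal is correct and follows essentially the same route as the paper, whose proof of this proposition is just the one-line invocation of Lemmas~\ref{LemPointwisePosL} and~\ref{lemBoundsPosL} plus dominated convergence; you simply spell out the details (the Riemann-sum normalization $\Delta\lambda=c_0^{-1}M^{-1/3}$, the split at $\lambda=M^{1/6}$ matching parts (a) and (b) of Lemma~\ref{lemBoundsPosL}, and the head/tail $\e$-argument). The only nitpick is a bookkeeping slip on the range $M^{1/6}<\lambda\leq M^{2/3}/c_0$, where the normalized summand is bounded by $CM^{1/3}e^{-2M^{1/6}}e^{-(\xi_i+\xi_j)}$ rather than $Ce^{-2M^{1/6}}e^{-(\xi_i+\xi_j)}$; after multiplying by $\Delta\lambda$ and summing the at most $M$ terms this still gives your stated total $CMe^{-2M^{1/6}}e^{-(\xi_i+\xi_j)}\to0$, so the conclusion is unaffected.
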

\begin{proof}
It follows from Lemmas~\ref{LemPointwisePosL} and~\ref{lemBoundsPosL} by applying dominated convergence.
\end{proof}

Similarly, for $\lambda<0$ we have the following estimates.
\begin{lem}\label{lemBoundsNegL}
$ $\\
(a) For all $\lambda<0$,
\begin{equation}
\left|\sqrt{c_0} M^{1/6}\varphi_{[M-\lambda c_0 M^{1/3}]}(c_1 M^{2/3}+\xi)\right|\leq C
\end{equation}
for some constant $C>0$.\\
(b) For $-M^{1/6}<\lambda<0$,
\begin{equation}
e^{-\left(\omega_{[M-\lambda c_0 M^{1/3}]}-c_1 M^{2/3}\right) (\tau_j-\tau_i)}\leq C e^{\lambda (\tau_j-\tau_i)},
\end{equation}
for some constant $C>0$.\\
(c) For $\lambda<-M^{1/6}$,
\begin{equation}
e^{-\left(\omega_{[M-\lambda c_0 M^{1/3}]}-c_1 M^{2/3}\right) (\tau_j-\tau_i)}\leq e^{-\min\{\frac34 (-\lambda),(-\lambda)^{2/3}M^{2/9}\} (\tau_j-\tau_i)}.
\end{equation}
\end{lem}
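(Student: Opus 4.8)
The plan is to treat the three assertions separately, as they reflect three regimes of the index $k=[M-\lambda c_0M^{1/3}]$, which for $\lambda<0$ satisfies $k\geq M$: (a) bounds the eigenfunction $\varphi_k$ for all such $k$, while (b) and (c) bound the eigenvalue factor for $k$ slightly above, respectively far above, $M$. For (a) I would avoid the refined asymptotics \eqref{eq26} (reliable only for moderate $|\lambda|$) and use instead the defining formula $\varphi_k(x)=\Ai(-\omega_k+x)/\big((-1)^{k-1}\Ai'(-\omega_k)\big)$ together with the two facts recalled just before the lemma: $|\Ai(\cdot)|\leq c$ everywhere and $|\Ai'(-\omega_k)|$ is non-decreasing in $k$. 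Since $k\geq M$ this gives $|\Ai'(-\omega_k)|\geq|\Ai'(-\omega_M)|$, and from $\Ai'(-\omega_M)=(-1)^{M-1}f_1\big(\tfrac{3\pi}{2}(M-\tfrac{1}{4})\big)$ with $f_1(z)=\tfrac{z^{1/6}}{\sqrt{\pi}}\big(1+\Or(z^{-2})\big)$ together with the identity $\tfrac{1}{\sqrt{\pi}}\big(\tfrac{3\pi}{2}\big)^{1/6}=\sqrt{c_0}$ one gets $|\Ai'(-\omega_M)|\geq\tfrac{1}{2}\sqrt{c_0}M^{1/6}$ for $M$ large; dividing yields $|\sqrt{c_0}M^{1/6}\varphi_k(x)|\leq 2c$, uniformly in $\lambda<0$, in $\xi$, and in $M$ large. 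For (b), when $-M^{1/6}<\lambda<0$ the index satisfies $M<k<M+c_0M^{1/2}$, so \eqref{eq24} applies and gives $\omega_{[M-\lambda c_0M^{1/3}]}-c_1M^{2/3}=-\lambda+\Or(M^{-1/3})+\Or(\lambda^2M^{-2/3})$; since $\lambda^2<M^{1/3}$ the last error is also $\Or(M^{-1/3})$, so $\omega_{[M-\lambda c_0M^{1/3}]}-c_1M^{2/3}\geq-\lambda-C'$ for a fixed $C'$ and all $M$ large. Multiplying by $-(\tau_j-\tau_i)<0$ and exponentiating gives (b) with $C=e^{C'(\tau_j-\tau_i)}$, a constant because $\tau_i<\tau_j$ are fixed.

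Part (c) is the heart of the argument and the step I expect to be the main obstacle, because when $\lambda<-M^{1/6}$ the correction $\lambda^2M^{-2/3}$ in \eqref{eq24} is no longer small and $\omega_k$ crosses over from $\approx-\lambda$ (linear regime) to $\approx(-\lambda)^{2/3}M^{2/9}$ (power regime). Here I would go back to the exact $\omega_k=f\big(\tfrac{3\pi}{2}(k-\tfrac{1}{4})\big)$ with $f(z)=z^{2/3}\big(1+\Or(z^{-2})\big)$; since $k\geq M$ this reads $\omega_k=c_1(k-\tfrac{1}{4})^{2/3}\big(1+\Or(M^{-2})\big)$, and with $\mu:=-\lambda>M^{1/6}$ and $k\geq M+\mu c_0M^{1/3}-1$ it gives
\[
\omega_{[M-\lambda c_0M^{1/3}]}-c_1M^{2/3}\geq c_1M^{2/3}\big[(1+u)^{2/3}-1\big]\big(1-\Or(M^{-2})\big)-\Or(M^{-1/3}),\qquad u:=\mu c_0M^{-2/3}.
\]
Using $\mu=uM^{2/3}/c_0$ and the identities $c_0c_1=\tfrac{3}{2}$ and $c_0^{2/3}c_1>1$, the target bound $\omega_{[M-\lambda c_0M^{1/3}]}-c_1M^{2/3}\geq\min\{\tfrac{3}{4}\mu,\,\mu^{2/3}M^{2/9}\}$ reduces to the single elementary inequality $(1+u)^{2/3}-1\geq\min\{\tfrac{1}{2}u,\,\kappa\,u^{2/3}\}$ with $\kappa=1/(c_0^{2/3}c_1)<1$, which holds for every $u>0$ with room to spare (the ratio of the two sides tends to $\tfrac{4}{3}$ as $u\to0$, to $c_0^{2/3}c_1>1$ as $u\to\infty$, and stays above $1$ in between). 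The $\big(1-\Or(M^{-2})\big)$ factor and the additive $\Or(M^{-1/3})$ are then harmless: the former costs only a $1-\Or(M^{-2})$ factor on a nonnegative quantity, and since $\mu>M^{1/6}$ we have $\min\{\tfrac{3}{4}\mu,\,\mu^{2/3}M^{2/9}\}\gg M^{-1/3}$. Multiplying by $-(\tau_j-\tau_i)<0$ and exponentiating gives (c).

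The only genuinely delicate point is thus the bookkeeping in (c): arranging that a single $\min$ of two clean functions of $\mu$ stays below $\omega_{[M-\lambda c_0M^{1/3}]}-c_1M^{2/3}$ uniformly over all $\mu>M^{1/6}$, including the crossover $\mu\asymp M^{2/3}$, with enough slack left for the lower-order errors. Parts (a) and (b) are direct consequences of the bounds \eqref{eq32} on $\Ai,\Ai'$ and of the expansion \eqref{eq24}.
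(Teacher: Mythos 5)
Your proof is correct and follows essentially the same route as the paper: (a) via the uniform bound on $\Ai$ together with the monotonicity of $|\Ai'(-\omega_k)|$ in $k$ and the asymptotics \eqref{eq25}, (b) via the expansion \eqref{eq24} with $\lambda^2M^{-2/3}\leq M^{-1/3}$, and (c) via $\omega_k\simeq c_1(k-\tfrac14)^{2/3}$ combined with an elementary inequality for $(1+u)^{2/3}$ (the paper invokes $(1+x)^{2/3}\geq 1+\tfrac12\min\{x^{2/3},x\}$). Your bookkeeping of the constants in (c), using $c_0c_1=\tfrac32$ and $c_0^{2/3}c_1>1$ and checking the inequality $(1+u)^{2/3}-1\geq\min\{\tfrac12 u,\kappa u^{2/3}\}$ with uniform slack, is if anything slightly more careful than the paper's one-line argument.
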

\begin{proof}
(a) Using $|\Ai'(-\omega_{[M-\lambda c_0 M^{1/3}]})|\geq |\Ai'(-\omega_{M})|$ and $|\Ai(x)|\leq c\leq 1$ we get
\begin{equation}
\left|\sqrt{c_0} M^{1/6}\varphi_{[M-\lambda c_0 M^{1/3}]}(c_1 M^{2/3}+\xi)\right| \leq \frac{\sqrt{c_0} M^{1/6}}{|\Ai'(-\omega_{M})|}\leq C,
\end{equation}
where in the last inequality we used \eqref{eq25}.\\
(b) Denote $\tilde\lambda=-\lambda>0$. For $\tilde\lambda\leq M^{1/6}$,
$\omega_{[M+\tilde \lambda c_0 M^{1/3}]}-c_1 M^{2/3}\geq \tilde\lambda+\Or(M^{-1/3})$, from which it follows
\begin{equation}
e^{-\left(\omega_{[M+\tilde\lambda c_0 M^{1/3}]}-c_1 M^{2/3}\right) (\tau_j-\tau_i)}\leq C e^{-\tilde\lambda (\tau_j-\tau_i)}.
\end{equation}
(c) For $\tilde\lambda>M^{1/6}$, $\omega_{[M+\tilde \lambda c_0 M^{1/3}]}-c_1 M^{2/3}\geq \min\{\frac34 \tilde\lambda,\tilde\lambda^{2/3}M^{2/9}\}$, where we used the inequality $(1+x)^{2/3}\geq 1+\frac12 \min\{x^{2/3},x\}$. This gives
\begin{equation}
e^{-\left(\omega_{[M+\tilde\lambda c_0 M^{1/3}]}-c_1 M^{2/3}\right) (\tau_j-\tau_i)}\leq e^{-\min\{\frac34 \tilde\lambda,\tilde\lambda^{2/3}M^{2/9}\} (\tau_j-\tau_i)}.
\end{equation}
\end{proof}

The convergence in \eqref{eq34} is coming from the exponential term.
\begin{prop}\label{PropNegL}
For $\tau_i<\tau_j$, uniformly for $\xi_i,\xi_j$ in a bounded set, we have
\begin{equation}
\lim_{M\to\infty}\tilde K_M(\xi_i,\tau_i;\xi_j,\tau_j)=-\int_{-\infty}^0 e^{-\lambda(\tau_i-\tau_j)}\Ai(\xi_i+\lambda)\Ai(\xi_j+\lambda)
\end{equation}
and, there exists a constant $C>0$ such that
\begin{equation}\label{eqBound2}
|\tilde K_M(\xi_i,\tau_j;\xi_i,\tau_j)|\leq C
\end{equation}
uniformly for all $M$ large enough.
\end{prop}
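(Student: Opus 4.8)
\textbf{Proof proposal for Proposition~\ref{PropNegL}.}

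The plan is to mirror the structure used for Proposition~\ref{PropPosL}, namely to establish pointwise convergence of each summand in \eqref{eq34} and then pass the limit through the sum by dominated convergence, the only difference being that here the decay that makes the sum summable comes from the exponential prefactor rather than from the Airy functions. First I would recall from Lemma~\ref{LemPointwisePosL} that for each fixed $\lambda<0$ the product $\sqrt{c_0}M^{1/6}\varphi_{[M-\lambda c_0 M^{1/3}]}(c_1 M^{2/3}+\xi_i)\varphi_{[M-\lambda c_0 M^{1/3}]}(c_1 M^{2/3}+\xi_j)$ converges to $\Ai(\lambda+\xi_i)\Ai(\lambda+\xi_j)$, and the exponential prefactor $e^{-(\omega_{[M-\lambda c_0 M^{1/3}]}-c_1 M^{2/3})(\tau_j-\tau_i)}$ converges to $e^{\lambda(\tau_j-\tau_i)}=e^{-\lambda(\tau_i-\tau_j)}$. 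Since $\tilde K_M$ is a Riemann sum over the grid $J_M=c_0^{-1}M^{-1/3}\{-1,-2,\ldots\}$ with mesh $c_0^{-1}M^{-1/3}$, multiplying and dividing by $c_0^{-1}M^{-1/3}$ turns \eqref{eq34} into $-\sum_{\lambda\in J_M}(c_0^{-1}M^{-1/3})\,g_M(\lambda)$ where $g_M(\lambda)\to e^{-\lambda(\tau_i-\tau_j)}\Ai(\xi_i+\lambda)\Ai(\xi_j+\lambda)$ pointwise; this is the Riemann sum whose limit is the asserted integral $-\int_{-\infty}^0 e^{-\lambda(\tau_i-\tau_j)}\Ai(\xi_i+\lambda)\Ai(\xi_j+\lambda)\,d\lambda$.

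The bulk of the work is producing a dominating function, and this is where Lemma~\ref{lemBoundsNegL} does the job. Writing $\tilde\lambda=-\lambda>0$ and splitting the range of summation at $\tilde\lambda=M^{1/6}$: on $0<\tilde\lambda\leq M^{1/6}$, part~(a) of Lemma~\ref{lemBoundsNegL} bounds the $\varphi$-product by a constant and part~(b) bounds the exponential by $C e^{-\tilde\lambda(\tau_j-\tau_i)}$, so each summand is at most $C e^{-\tilde\lambda(\tau_j-\tau_i)}$; summing over the grid points spaced $c_0^{-1}M^{-1/3}$ apart gives a uniformly convergent geometric-type sum times the mesh, which is $\Or(1)$ uniformly in $M$ because $\tau_j-\tau_i>0$. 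On $\tilde\lambda>M^{1/6}$, part~(a) again bounds the $\varphi$-product by a constant while part~(c) bounds the exponential by $e^{-\min\{\frac34\tilde\lambda,\tilde\lambda^{2/3}M^{2/9}\}(\tau_j-\tau_i)}$, which for $\tilde\lambda>M^{1/6}$ is at most $e^{-\frac34\tilde\lambda(\tau_j-\tau_i)}$ (once $M$ is large the $\frac34\tilde\lambda$ branch is the active one on the relevant range, or one simply uses that both branches exceed $\frac34 M^{1/6}$ there to get a crude bound); multiplying by the mesh and summing contributes a vanishing tail. The combination of these two estimates is, up to the mesh factor, dominated by the fixed integrable function $\tilde\lambda\mapsto C e^{-\frac34\tilde\lambda(\tau_j-\tau_i)}$, so dominated convergence applies to the Riemann sum and yields both the stated limit and the uniform bound \eqref{eqBound2}: indeed $|\tilde K_M|\leq c_0^{-1}M^{-1/3}\sum_{\tilde\lambda\in c_0^{-1}M^{-1/3}\N}C e^{-\frac34\tilde\lambda(\tau_j-\tau_i)}\leq C'$ uniformly, the constant depending only on $\tau_j-\tau_i$ and the bound on $\xi_i,\xi_j$.

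The main obstacle, and the reason the argument is genuinely different from the $\tau_i\geq\tau_j$ case, is that for $\lambda<0$ the Airy functions $\Ai(\xi+\lambda)$ do \emph{not} decay — as $\lambda\to-\infty$ they are merely bounded (oscillating), so there is no gain in $\xi$ and hence no bound of the form $C e^{-(\xi_i+\xi_j)}$; summability must be extracted entirely from the spectral gap encoded in $\omega_{[M-\lambda c_0 M^{1/3}]}-c_1 M^{2/3}$, and one has to be careful that this quantity really does grow like $|\lambda|$ uniformly down to $\lambda$ of order $-M^{1/3}$ (the edge of the grid $J_M$), which is exactly what the two-regime split in Lemma~\ref{lemBoundsNegL}(b)--(c) is designed to handle. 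A secondary point requiring a line of care is that the limiting kernel is now only bounded, not exponentially decaying in $\xi_i,\xi_j$; when this proposition is later fed into the trace-class/Fredholm-determinant convergence, one will need the off-diagonal bound \eqref{eqBound1} from Proposition~\ref{PropPosL} (valid for $\tau_i\geq\tau_j$, hence in particular on the diagonal blocks) to control the relevant operator norms, so it is worth remarking here that \eqref{eqBound2} is used together with \eqref{eqBound1}, not on its own.
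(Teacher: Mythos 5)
Your argument follows the same route as the paper: pointwise convergence of the summands (Lemma~\ref{LemPointwisePosL} holds for any fixed $\lambda$, negative included), the uniform bounds of Lemma~\ref{lemBoundsNegL}, and dominated convergence applied to the Riemann sum over $J_M$ with mesh $c_0^{-1}M^{-1/3}$ supplied by the normalization of the $\varphi$-product; this is exactly what the paper's (one-line) proof invokes, and your identification that the summability now comes from the spectral term rather than Airy decay is the right point.

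One detail in your domination step is, however, not correct as stated. For $\tilde\lambda>M^{1/6}$ you claim the exponential factor is at most $e^{-\frac34\tilde\lambda(\tau_j-\tau_i)}$, i.e.\ that $\min\{\tfrac34\tilde\lambda,\tilde\lambda^{2/3}M^{2/9}\}\geq\tfrac34\tilde\lambda$; this fails as soon as $\tilde\lambda>(\tfrac43)^3M^{2/3}$, and since the grid $J_M$ is infinite (the sum in \eqref{eq34} runs over all $k\geq M+1$) this deep-tail regime is genuinely present, so the "$\frac34\tilde\lambda$ branch is active on the relevant range" justification does not apply. Your fallback — both branches exceed $\tfrac34 M^{1/6}$, hence a "crude bound" — is also insufficient on its own, because a bound constant in $\tilde\lambda$ cannot be summed over the infinitely many tail grid points. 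The repair is immediate and keeps your structure intact: since $M^{2/9}\geq 1$ and $\tilde\lambda^{2/3}\leq\tilde\lambda$ for $\tilde\lambda\geq1$, one has $\min\{\tfrac34\tilde\lambda,\tilde\lambda^{2/3}M^{2/9}\}\geq\tfrac34\tilde\lambda^{2/3}$, so the $M$-independent dominating function $C e^{-\frac34\tilde\lambda^{2/3}(\tau_j-\tau_i)}$ (integrable in $\tilde\lambda$ since $\tau_j-\tau_i>0$) works on the whole tail and yields both the limit and the uniform bound \eqref{eqBound2}. With that substitution your proof is complete and coincides in substance with the paper's.
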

\begin{proof}
It follows from Lemmas~\ref{LemPointwisePosL} and~\ref{lemBoundsNegL} by applying dominated convergence.
\end{proof}

We have now all the ingredient to complete the proof of our theorem. From \eqref{eq4.32} we have
\begin{equation}
\begin{aligned}
&\textrm{l.h.s.~of~}\eqref{eq51} =\lim_{M\to\infty} \det(\Id-\chi_S \tilde K_M)_{L^2(\R\times\{\tau_1,\ldots,\tau_m\})} \\
&=\lim_{M\to\infty}\sum_{n\geq 0} \frac{(-1)^n}{n!} \sum_{i_1,\ldots,i_n\in\{1,\ldots,m\}} \int_{S_{i_1}}^\infty d\xi_1 \cdots \int_{S_{i_n}}^\infty d\xi_n
\det\left[\tilde K_M(\xi_j,\tau_{i_j};\xi_k,\tau_{i_k})\right]_{1\leq j,k\leq n}\\
&=\sum_{n\geq 0} \frac{(-1)^n}{n!} \sum_{i_1,\ldots,i_n\in\{1,\ldots,m\}} \int_{S_{i_1}}^\infty d\xi_1 \cdots \int_{S_{i_n}}^\infty d\xi_n
\det\left[K_{\rm Ai}(\xi_j,\tau_{i_j};\xi_k,\tau_{i_k})\right]_{1\leq j,k\leq n}\\
&=\det(\Id-\chi_S K_{\rm Ai})_{L^2(\R\times\{\tau_1,\ldots,\tau_m\})} = \textrm{r.h.s.~of~}\eqref{eq51}.
\end{aligned}
\end{equation}
To justify the exchange of limit and sums/integrals we use dominated convergence. By Propositions~\ref{PropPosL} and~\ref{PropNegL} we have pointwise convergence of the kernel to the extended Airy kernel. To apply dominated convergence we use the bounds in Propositions~\ref{PropPosL} and~\ref{PropNegL}, together with Hadamard's bound, which says that for a $n\times n$ matrix $A$ with $|A_{i,j}|\leq 1$, $|\det(A)|\leq n^{n/2}$. This is by now very standard, see e.g.~\cite{BFP06} for detailed computations. This completes the proof of Theorem~\ref{thmMain}.


\begin{thebibliography}{10}

\bibitem{AS84}
M.~Abramowitz and I.A. Stegun, \emph{Pocketbook of mathematical functions},
  Verlag Harri Deutsch, Thun-Frankfurt am Main, 1984.

\bibitem{Bod99}
T.~Bodineau, \emph{{The Wulff construction in three and more dimensions}},
  Comm. Math. Phys. \textbf{207} (1999), 107--229.

\bibitem{BSS05}
T.~Bodineau, R.H. Schonmann, and S.~Shlosman, \emph{{3D crystal: how flat its
  flat facets are?}}, Comm. Math. Phys. \textbf{255} (2005), 747--766.

\bibitem{Bor98}
A.~Borodin, \emph{Biorthogonal ensembles}, Nucl. Phys. B \textbf{536} (1999),
  704--732.

\bibitem{BF07}
A.~Borodin and P.L. Ferrari, \emph{{Large time asymptotics of growth models on
  space-like paths I: PushASEP}}, Electron. J. Probab. \textbf{13} (2008),
  1380--1418.

\bibitem{BFP06}
A.~Borodin, P.L. Ferrari, and M.~Pr{\"a}hofer, \emph{{Fluctuations in the
  discrete TASEP with periodic initial configurations and the Airy$_1$
  process}}, Int. Math. Res. Papers \textbf{2007} (2007), rpm002.

\bibitem{RB04}
A.~Borodin and E.M. Rains, \emph{{Eynard-Mehta theorem, Schur process, and
  their Pfaffian analogs}}, J. Stat. Phys. \textbf{121} (2006), 291--317.

\bibitem{CIW19}
P.~Caputo, D.~Ioffe, and V.~Wachtel, \emph{{Confinement of Brownian polymers
  under geometric area tilts}}, Electron. J. Probab. \textbf{24} (2019),
  no.~37, 1--21.

\bibitem{CLMST16}
P.~Caputo, E.~Lubetzky, F.~Martinelli, A.~Sly, and F.L. Toninelli,
  \emph{{Scaling limit and cube-root fluctuations in SOS surfaces above a
  wall}}, J. Eur. Math. Soc. \textbf{18} (2016), 931--995.

\bibitem{CP00}
R.~Cerf and \'A. Pisztora, \emph{{On the Wulff crystal in the Ising model}},
  Ann. Probab. \textbf{28} (2000), 947--1017.

\bibitem{NIST:DLMF}
\emph{{\it NIST Digital Library of Mathematical Functions}},
  http://dlmf.nist.gov/, Release 1.1.5 of 2022-03-15, F.~W.~J. Olver, A.~B.
  {Olde Daalhuis}, D.~W. Lozier, B.~I. Schneider, R.~F. Boisvert, C.~W. Clark,
  B.~R. Miller, B.~V. Saunders, H.~S. Cohl, and M.~A. McClain, eds.

\bibitem{DKS92}
R.L. Dobrushin, R.~Koteck\'{y}, and S.~Shlosman, \emph{{Wulff construction: a
  global shape from local interaction}}, Amer. Math. Soc., Providence, 1992.

\bibitem{EM97}
B.~Eynard and M.L. Mehta, \emph{Matrices coupled in a chain. {I}. {E}igenvalue
  correlations}, J. Phys. A \textbf{31} (1998), 4449--4456.

\bibitem{FS03}
P.L. Ferrari and H.~Spohn, \emph{Step fluctations for a faceted crystal}, J.
  Stat. Phys. \textbf{113} (2003), 1--46.

\bibitem{FS04}
P.L. Ferrari and H.~Spohn, \emph{Constrained {B}rownian motion: fluctuations
  away from circular and parabolic barriers}, Ann. Probab. \textbf{33} (2005),
  1302--1325.

\bibitem{GG21}
S.~Ganguly and R.~Gheissari, \emph{Local and global geometry of the 2d ising
  interface in critical prewetting}, Ann. Probab. \textbf{49} (2021), no.~4,
  2076--2140.

\bibitem{GS21}
T.~Gauti\'e and N.R. Smith, \emph{{Constrained non-crossing Brownian motions,
  fermions and the Ferrari–Spohn distribution}}, J. Stat. Mech. (2001),
  033212.

\bibitem{IOSV22}
D.~Ioffe, S.~Ott, S.~Shlosman, and Y.~Velenik, \emph{{Critical prewetting in
  the 2d Ising model}}, Ann. Probab. \textbf{50} (2022), 1127--1172.

\bibitem{IS19}
D.~Ioffe and S.~Shlosman, \emph{Formation of facets for an effective model of
  crystal growth}, Sojourns in Probability Theory and Statistical Physics - I
  (Singapore) (Vladas Sidoravicius, ed.), Springer Singapore, 2019,
  pp.~199--245.

\bibitem{ISV15}
D.~Ioffe, S.~Shlosman, and Y.~Velenik, \emph{{An invariance principle to
  Ferrari-Spohn diffusions}}, Comm. Math. Phys. \textbf{336} (2015), 905--932.

\bibitem{IVW16}
D.~Ioffe, Y.~Velenik, and V.~Wachtel, \emph{{Dyson Ferrari–Spohn diffusions
  and ordered walks under area tilts}}, Probab. Theory Relat. Fields
  \textbf{170} (2018), 11--47.

\bibitem{Jo03b}
K.~Johansson, \emph{Discrete polynuclear growth and determinantal processes},
  Comm. Math. Phys. \textbf{242} (2003), 277--329.

\bibitem{Lan00}
L.J. Landau, \emph{Bessel functions: monotonicity and bounds}, J. London Math.
  Soc. \textbf{61} (2000), 197--215.

\bibitem{FN98}
T.~Nagao and P.J. Forrester, \emph{{Multilevel dynamical correlation functions
  for Dyson's Brownian motion model of random matrices}}, Phys. Lett. A
  \textbf{247} (1998), 42--46.

\bibitem{OR01}
A.~Okounkov and N.~Reshetikhin, \emph{Correlation function of {S}chur process
  with application to local geometry of a random 3-dimensional {Y}oung
  diagram}, J. Amer. Math. Soc. \textbf{16} (2003), 581--603.

\bibitem{PS02}
M.~Pr{\"a}hofer and H.~Spohn, \emph{Scale invariance of the {PNG} droplet and
  the {A}iry process}, J. Stat. Phys. \textbf{108} (2002), 1071--1106.

\bibitem{SS96}
R.H. Schonmann and S.~Shlosman, \emph{{Constrained variational problem with
  applications to the Ising model}}, J. Stat. Phys. \textbf{83} (1996),
  867--905.

\bibitem{TW94}
C.A. Tracy and H.~Widom, \emph{{Level-spacing distributions and the Airy
  kernel}}, Comm. Math. Phys. \textbf{159} (1994), 151--174.

\bibitem{TW03b}
C.A. Tracy and H.~Widom, \emph{Differential equations for {Dyson} processes},
  Comm. Math. Phys. \textbf{252} (2004), 7--41.

\end{thebibliography}

\end{document}